\newtheorem{thm}{Theorem}[section]
\newtheorem{lem}[thm]{Lemma}
\newtheorem{prob}[thm]{Problem}
\newtheorem{ques}[thm]{Question}
\newtheorem{prop}[thm]{Proposition}
\newtheorem{defn}[thm]{Definition}
\newtheorem{rem}[thm]{\bf{Remark}}
\numberwithin{equation}{section}
\numberwithin{equation}{subsection}
\begin{document}

\title{Total dominator coloring of central graphs}

\author{Farshad Kazemnejad$^{1}$, and Adel P. Kazemi$^{2}$\footnote{Corresponding author} \\[1em]
$^{1,2}$ Department of Mathematics, \\ University of Mohaghegh Ardabili, \\ P.O.\ Box 5619911367, Ardabil, Iran. \\
[1em]
$^1$ Email: kazemnejad.farshad@gmail.com \\
$^2$ Email: adelpkazemi@yahoo.com \\[1em]
}

\maketitle

\begin{abstract}
A total dominator coloring of a graph $G$ is a proper coloring of
$G$ in which each vertex of the graph is adjacent to every vertex of
some color class. The total dominator chromatic number of a graph is the minimum number of color classes in a total dominator
coloring of it. Here, we study the total dominator coloring on central graphs by giving some tight bounds for the total dominator chromatic number of the central of a graph, join of two graphs and Nordhaus-Gaddum-like relations. Also we will calculate  the total dominator chromatic number of the central of a path, a cycle, a wheel, a complete graph and a complete multipartite graph.
\\[0.2em]

\noindent
Keywords: Total dominator coloring, Total dominator chromatic number, total domination number, central graph, Nordhaus-Gaddum relation.
\\[0.2em]

\noindent
MSC(2010): 05C15, 05C69.
\end{abstract}

\pagestyle{myheadings}
\markboth{\centerline {\scriptsize  F. Kazemnejad and A. P. Kazemi}}     {\centerline {\scriptsize F. Kazemnejad and A. P. Kazemi,                           \mbox{~~~~~~~~~~~~~~~~~~~~~~~~~~~~~~~~~~~~~~~~~~~~~~~~~~~~~~~~~~~~~~~}Total dominator coloring  of central graphs}}


\section{\bf Introduction}

All graphs considered here are non-empty, finite, undirected and simple. For
standard graph theory terminology not given here we refer to \cite{West}. Let $%
G=(V,E) $ be a graph with the \emph{vertex set} $V$ of \emph{order}
$n(G)$ and the \emph{edge set} $E$ of \emph{size} $m(G)$. The
\emph{open neighborhood} and the \emph{closed neighborhood} of a
vertex $v\in V$ are $N_{G}(v)=\{u\in V\ |\ uv\in E\}$ and
$N_{G}[v]=N_{G}(v)\cup \{v\}$, respectively. The \emph{degree} of a
vertex $v$ is also $deg_G(v)=\vert N_{G}(v) \vert $. The
\emph{minimum} and \emph{maximum degree} of $G$ are denoted by
$\delta =\delta (G)$ and $\Delta =\Delta (G)$, respectively. We write $K_{n}$, $C_{n}$ and $P_{n}$ for a \emph{complete graph}, a \emph{cycle} and a \emph{path} of order $n$, respectively, while $G[S]$, $W_n$ and $K_{n_1,n_2,\cdots,n_p}$ denote the subgraph of $G$ \emph{induced} by a vertex set $S$, a \emph{wheel} of order $n+1$, and a \emph{complete $p$-partite graph}, respectively. The \emph{complement} of a graph $G$, denoted by $\overline{G}$, is a graph with the vertex set $V(G)$ and for every two vertices $v$ and $w$, $vw\in E(\overline{G})$\ if and only if $vw\not\in E(G)$. 

\vskip 0.2 true cm

Vernold et al., in \cite{Vv} by doing an operation on a given graph obtained the central of the graph as following.

\begin{defn}
\emph{\cite{Vv} The} central graph $C(G)$ \emph{of a graph $G=(V,E)$ of order $n$ and size $m$ is a graph of order $n+m$ and size $\binom{n}{2}+m $ which is obtained by subdividing each edge of $G$ exactly once and joining all the non-adjacent vertices of $G$ in $ C(G)$.}
\end{defn}

\vskip 0.2 true cm

\textbf{Total domination number.} Domination in graphs is now well studied in graph theory and the literature
on this subject has been surveyed and detailed in the two books by Haynes,
Hedetniemi, and Slater~\cite{hhs1, hhs2}. A famous type of domination is total domination, and the literature on this subject has been surveyed and detailed in the recent
book~\cite{HeYe13}. A \emph{total dominating set}, briefly TDS, $S$ of a graph $G$ is a subset
of the vertex set of $G$ such that for each vertex $v$, $N_G(v)\cap
S\neq \emptyset$. The \emph{total domination number $\gamma_t(G)$} of $G$ is the minimum cardinality of a TDS of $G$. 

\vskip 0.2 true cm

\textbf{Total dominator Coloring.} A \emph{proper coloring} of a graph $G$ is a function from
the vertices of the graph to a set of colors such that any two
adjacent vertices have different colors. The \emph{chromatic number}
$\chi (G)$ of $G$ is the minimum number of colors needed in a proper
coloring of a graph. In a proper coloring of a graph, a \emph{color class} of the
coloring is a set consisting of all those vertices assigned the same
color. If $f$ is a proper coloring of $G$ with the coloring classes
$V_1$, $V_2$, $\cdots$ , $V_{\ell}$ such that every vertex in $V_i$ has
color $i$, we write simply $f=(V_1,V_2,\cdots,V_{\ell})$. 

\vskip 0.2 true cm

Graph coloring is used as a model for a vast
number of practical problems involving allocation of scarce
resources (e.g., scheduling problems), and has played a key role in
the development of graph theory and, more generally, discrete
mathematics and combinatorial optimization. Graph colorability
is NP-complete in the general case, although the problem is solvable
in polynomial time for many classes \cite{GJ}. 

\vskip 0.2 true cm

Motivated by the relation between coloring and domination the notion of total dominator colorings was introduced in \cite{Kaz2015}, and for more information the reader can study \cite{Hen2015,Kaz2014,Kaz2016,Kaz-Par}.

\begin{defn} 
\label{total dominator coloring} \emph{ \cite{Kaz2015} A} total dominator coloring,
\emph{briefly TDC, of a graph $G$ with a possitive minimum degree is a proper coloring of $G$ in
which each vertex of the graph is adjacent to every vertex of some
color class. The }total dominator chromatic number $\chi_d^t(G)$
\emph{of $G$ is the minimum number of color classes in a TDC of $G$.}
\end{defn}

For a TDC $f=(V_1,V_2,\cdots,V_{\ell})$ of a graph $G$, a vertex $v$ is called a \emph{common neighbor} of
$V_i$ or we say that $v$ \emph{completely dominates} $V_i$ and write $v\succ_t V_i$ if $V_i \subseteq N(v)$. Otherwise we write $v \not\succ_t  V_i$. 
Also a vertex $v$ is called the \emph{private neighbor} of $V_i$ with respect to $f$ if $v\succ_t V_i$ and $v\nsucc_t V_j$ for all $j\neq i$. Also a TDC of $G$  with $\chi_d^t(G)$ colors is called a \emph{min}-TDS. 

\vspace{0.2cm}

The goal of the paper is to study the total dominator chromatic number of central of a graph. In more details, while we give some tight bounds for the total dominator chromatic number of the central of a connected or disconnected graph in Section 2, we discuss on the total dominator chromatic number of the central of the join of two graphs in Section 3. Then after giving some Nordhaus-Gaddum-like relations in Section 4, we will calculate  the total dominator chromatic number of the central of a path, a cycle, a wheel and a complete multipartite graph in the last section.

\vspace{0.2cm}
In this paper, by assumption $V=\{v_1,v_2,\cdots, v_n\}$ as the vertex set of a graph $G$, we consider $V(C(G))=V\cup \mathcal{C}$ as the vertex set of $C(G)$ in which $\mathcal{C}=\{c_{ij}~|~v_iv_j\in E\}$. And so $E(C(G))=\{v_ic_{ij},v_jc_{ij}~|~v_iv_j\in E\}\cup \{v_iv_j~|~v_iv_j\notin E\}$. The following theorems are useful for our investigation.

\begin{thm} \emph{\cite{Kaz2015}}
\label{2=<chi_d ^t=<n} For any connected graph $G$ of order $n$ with $\delta(G)\geq 1$, 
\[
\max\{\chi(G),\gamma_t(G),2\}\leq \chi_d ^t(G)\leq n.
\]
Furthermore, $\chi_d ^t(G)=2$ if and only if $G$ is a
complete bipartite graph, or $\chi_d ^t(G)=n$ if and only if $G$ is a complete graph.
\end{thm}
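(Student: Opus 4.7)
The plan is to decompose the statement into four parts: the lower bounds, the upper bound, the characterization $\chi_d^t(G)=2$, and the characterization $\chi_d^t(G)=n$.

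For the inequalities, I would note that $\chi(G)\le\chi_d^t(G)$ is automatic because every TDC is a proper coloring, and $\chi_d^t(G)\ge 2$ follows from $\delta(G)\ge 1$ forcing an edge. To establish $\gamma_t(G)\le\chi_d^t(G)$, I would take a min-TDC $(V_1,\dots,V_\ell)$, pick a representative $u_i\in V_i$ from each class, and argue that $\{u_1,\dots,u_\ell\}$ is a TDS since every vertex $v$ completely dominates some $V_i$ and therefore has $u_i$ in $N(v)$. The upper bound $\chi_d^t(G)\le n$ is delivered by the all-distinct coloring, which is a TDC because $\delta(G)\ge 1$ supplies each vertex with a neighbor whose singleton class lies in its open neighborhood.

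For the characterization $\chi_d^t(G)=2$ iff $G$ is complete bipartite, I would take a TDC with classes $V_1,V_2$: properness makes each $V_i$ independent, and the TDC condition on $v\in V_1$ requires $v$ to completely dominate a class; since $V_1$ is independent and $v\notin N(v)$, this class must be $V_2$, so $V_2\subseteq N(v)$. By symmetry $V_1\subseteq N(w)$ for all $w\in V_2$, hence $G=K_{|V_1|,|V_2|}$. The converse is immediate by coloring via the bipartition.

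For the characterization $\chi_d^t(G)=n$ iff $G=K_n$, the direction $G=K_n \Rightarrow \chi_d^t(G)=n$ comes from $\chi_d^t(G)\ge\chi(K_n)=n$ together with the universal upper bound. For the converse I would contrapose: assuming $G\ne K_n$, I would exhibit a TDC on $n-1$ colors. Let $S$ denote the set of support vertices of $G$, i.e., those $c$ for which some pendant has $c$ as its unique neighbor. My plan is to produce a non-adjacent pair $u,v\in V(G)\setminus S$ and then use the coloring that puts $u$ and $v$ in one class while giving each other vertex its own singleton class; this is proper on $n-1$ colors, and the TDC condition cannot fail because any offending vertex $x$ would need $N(x)=\{u\}$ or $N(x)=\{v\}$, which would place $u$ or $v$ in $S$. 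The main obstacle is producing the pair, which I would handle by a short case analysis on the pendants of $G$: if $G$ has no pendants then $S=\emptyset$ and $G\ne K_n$ directly supplies a non-adjacent pair; if $G$ has at least two pendants then any two of them are non-adjacent (by connectedness and $n\ge 3$) and both lie outside $S$; and if $G$ has exactly one pendant $p$ with support $c$ then for any $w\in V(G)\setminus\{c,p\}$ the pair $(p,w)$ is non-adjacent and sits inside $V(G)\setminus S$.
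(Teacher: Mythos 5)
The paper does not prove this theorem: it is imported verbatim from \cite{Kaz2015} as a known tool, so there is no internal proof to compare against. Judged on its own, your argument is correct and complete. The routine parts (each of $\chi(G)$, $\gamma_t(G)$, $2$ bounding below via representatives of color classes; the all-singleton coloring giving $\chi_d^t(G)\leq n$ since $\delta(G)\geq 1$; the two-class analysis forcing a complete bipartite structure) are all sound. The only delicate step is the contrapositive for $\chi_d^t(G)=n$, and your support-vertex device handles it cleanly: with $u,v$ non-adjacent and neither a support vertex, the coloring $(\{u,v\},\{w\}_{w\neq u,v})$ can only fail the total-domination condition at a vertex $x$ with $N(x)=\{u\}$ or $N(x)=\{v\}$, which is exactly what excluding support vertices rules out; and your three-case analysis (no pendants, at least two pendants, exactly one pendant) does produce such a pair, using that $G\neq K_n$ connected with $\delta(G)\geq 1$ forces $n\geq 3$ so that adjacent pendants or a pendant support vertex would disconnect the graph. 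I see no gap.
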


\begin{thm}\emph{\cite{HeYe13}}
\label{gamm_t (G) =< 4n/7}
If $G\not\in \{C_3,C_5,C_6,C_{10},H_{10},H'_{10}\}$ is a connected graph of order $n$ with $\delta (G) \geq 2$, then $\gamma_t(G)\leq  \lfloor 4n/7\rfloor$.

\end{thm}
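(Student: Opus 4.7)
The plan is to prove the bound by strong induction on the order $n$, following the general strategy pioneered by Thomass\'e and Yeo, while verifying directly that the six excluded graphs genuinely attain $\gamma_t > \lfloor 4n/7 \rfloor$. First I would dispose of the exceptional graphs: for each of $C_3, C_5, C_6, C_{10}, H_{10}, H'_{10}$ a small calculation confirms that $\gamma_t$ exceeds $\lfloor 4n/7 \rfloor$, so they must be excluded; this also provides the base case family for the induction. For small values of $n$ not covered by the exceptions, one checks the bound directly.

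For the inductive step, my strategy would be to locate a \emph{reducible configuration}: a vertex set $A \subseteq V(G)$, of size at most a constant, together with a set $D_A \subseteq V(G)$ that totally dominates $A$ in $G$, such that $|D_A| \leq \tfrac{4}{7}|A|$ and such that the graph $G' = G - A$ (possibly with a few auxiliary edges added to maintain $\delta \geq 2$) is either in a base case or again satisfies the hypotheses of the theorem. Applying the induction hypothesis to $G'$ gives a TDS $D'$ of $G'$ of size at most $\lfloor 4(n - |A|)/7 \rfloor$, and then $D_A \cup D'$ is a TDS of $G$ of size at most $\lfloor 4n/7 \rfloor$, provided one is careful at the boundary of $A$ to ensure total domination is preserved.

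The main obstacle, and the heart of the proof, is establishing that \emph{some} reducible configuration always exists unless $G$ is one of the excluded graphs. The natural candidates are short paths, short cycles, and vertices of degree two with specific neighborhood structure (e.g., a degree-$2$ vertex whose neighbors are adjacent, or a path $x{-}y{-}z$ of degree-$2$ vertices). I would argue by cases on the local structure around a vertex of minimum degree, repeatedly using the constraint $\delta \geq 2$ to force neighbors of neighbors to exist; each local configuration either yields a reduction with ratio $\leq 4/7$ or forces $G$ to extend into one of the forbidden shapes. A convenient bookkeeping device is to phrase everything in terms of transversals of the open neighborhood hypergraph $\mathcal{H}(G)$, so that the statement becomes a bound on the transversal number of $r$-uniform hypergraphs with $r \geq 2$ satisfying certain connectivity conditions, which aligns with the hypergraph-transversal machinery in the Henning--Yeo monograph \cite{HeYe13}.

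I expect the case analysis in this last step to be the real work: each forbidden graph corresponds to an obstruction in which no reducible configuration is available, and ruling out further obstructions requires tracking how $4/7$-deficit can accumulate along cycles and theta-like subgraphs of small length. Rather than grinding through the (lengthy) enumeration, I would appeal to the detailed treatment in \cite{HeYe13}, which collects the reductions into a compact list and verifies exhaustivity. The bound $\lfloor 4n/7 \rfloor$, not merely $4n/7$, follows because $|D_A \cup D'|$ is an integer; no extra work is required at the floor.
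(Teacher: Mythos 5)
This theorem is not proved in the paper at all: it is imported verbatim from the Henning--Yeo monograph \cite{HeYe13} (the result goes back to Henning's structural analysis of connected graphs with minimum degree two), and the paper's ``proof'' consists solely of the citation. So there is nothing in the paper to compare your argument against, and any assessment has to be of your proposal on its own terms.

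On those terms, your outline correctly identifies the known architecture of the proof --- verify the six exceptional graphs by hand, then induct by excising a bounded-size set $A$ together with a partial dominating set $D_A$ with $|D_A|\le \tfrac47|A|$ --- and your remark that the floor causes no trouble is right, since $\lfloor 4(n-|A|)/7\rfloor+\lfloor 4|A|/7\rfloor\le\lfloor 4n/7\rfloor$. But the proposal has a genuine gap, and you name it yourself: the entire content of the theorem lies in proving that a reducible configuration exists in every non-exceptional graph, and you explicitly decline to carry out that case analysis, deferring it to \cite{HeYe13} --- the very source the theorem is being quoted from. As a standalone argument this is circular; as a proof sketch it omits exactly the part that distinguishes a proof from a statement of strategy. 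Two further technical points would need care even in a full write-up: (i) after deleting $A$ the graph $G'=G-A$ may be disconnected or may itself be one of the six exceptional graphs, and in either situation the inductive hypothesis does not apply directly, so the reduction must either compensate for the excess $\gamma_t(G')-\lfloor 4n'/7\rfloor$ or be chosen to avoid landing on an exception; (ii) adding ``auxiliary edges'' to restore $\delta\ge 2$ changes the open neighborhoods, so a total dominating set of the modified $G'$ need not restrict to one of $G-A$, and the boundary bookkeeping you allude to is where such arguments most often break. None of this is unfixable --- it is precisely what the hundred-odd cases in the literature handle --- but your proposal does not do it.
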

\begin{figure}[ht]
\centerline{\includegraphics[width=8.5cm, height=2.8cm]{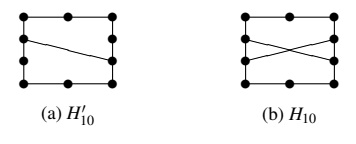}}
\vspace*{-0.3cm}
\caption{The graphs $H_{10}$ and $H'_{10}$}\label{H10}
\end{figure}



\section{\bf General bounds }
In this section, we establish some bounds on the total dominator chromatic number of the central of a graph. First, we consider connected graphs.
\begin{thm}
\label{n/2+2 =< chi_d t(C(G))=<n/2+n}
For any connected graph $G$ of order $n\geq 2$ which its longest path has order $t$,
\[
\lfloor 2n/3 \rfloor +1\leq \chi_d ^t(C(G))\leq n+ \lceil t/2 \rceil.
\]
\end{thm}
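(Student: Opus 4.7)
Upper and lower bounds are handled independently.

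\textbf{Upper bound.} The plan is to exhibit a concrete TDC of $C(G)$ with at most $n + \lceil t/2\rceil$ colors. First I would assign color $i$ to each original vertex $v_i$, producing $n$ singleton classes $\{v_1\},\ldots,\{v_n\}$. After fixing a longest path $P = u_1 u_2 \cdots u_t$ in $G$, I would color the $t-1$ path-subdivisions $c_{u_1 u_2},\ldots,c_{u_{t-1}u_t}$ with $\lceil t/2\rceil$ new colors, grouped so that each interior path vertex $u_\ell$ is adjacent to a monochromatic pair of path-subdivisions, enabling $u_\ell$ to dominate that color class. Every remaining off-path subdivision $c_{ij}$ has only two forbidden colors (those of $v_i$ and $v_j$) and so can be assigned one of the already-used colors without violating properness.

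Verifying the TDC condition vertex-by-vertex: every subdivision dominates the singleton of one of its two endpoint originals; every non-universal original $v_i$ dominates the singleton of some non-neighbor of $v_i$ in $G$; and every universal vertex of $G$—which must lie on $P$ by the standard path-extension argument—dominates its dedicated path-subdivision class. The main obstacle is to calibrate the grouping so that the parity cases (odd $t$ and universal endpoints of $P$) are both handled within only $\lceil t/2\rceil$ new colors rather than $\lceil t/2\rceil+1$.

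\textbf{Lower bound.} Fix any TDC of $C(G)$ with $k = \chi_d^t(C(G))$ color classes. Since each subdivision $c_{ij}$ has only the two neighbors $v_i$ and $v_j$ in $C(G)$, the color class it dominates must be a subset of $\{v_i,v_j\}$, hence one of the three \emph{small} pure-original classes $\{v_i\}$, $\{v_j\}$, or $\{v_i,v_j\}$. I would partition $V(G) = A \cup V(B) \cup T$ (disjoint), where $A$ is the set of originals in singleton classes, $V(B)$ collects the originals in pair classes $\{v_i,v_j\}$ with $v_iv_j\in E(G)$, and $T$ is the remainder. The "small class per edge" requirement forces $T$ to be independent in $G$ (otherwise an edge inside $T$ would admit no small class), so $T$ is a clique in $C(G)$ and contributes $|T|$ distinct color classes. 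If in addition $T$ is empty, then no subdivision can join a small pure-original class without destroying its smallness, forcing one additional color class. Combining these observations with $|A|+2|B|+|T|=n$ gives
\[
k \ge |A| + |B| + |T| + [T=\emptyset] = n - |B| + [T=\emptyset].
\]

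To finish, I would show that $|B| \le \lceil n/3\rceil - 1$ when $T\ne\emptyset$, and $|B| \le \lceil n/3\rceil$ when $T=\emptyset$, which together yield the desired $k \ge \lfloor 2n/3\rfloor + 1$. These bounds come from the structural fact that $G[V\setminus A]$ coincides with the matching $B$ together with the isolated set $T$, combined with the connectedness of $G$: this prevents $|A|$ from being too small whenever both $|B|$ and $|T|$ are nonzero, and similarly restricts $|B|$ in the $T=\emptyset$ case. The hard part is precisely this final combinatorial estimation—extracting the factor $2/3$ from the interplay among the vertex cover $A$, the matching $B$, and the connectedness requirement on $G$.
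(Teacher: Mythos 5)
Your overall architecture (an explicit coloring for the upper bound; the observation that $N_{C(G)}(c_{ij})=\{v_i,v_j\}$ forces ``small'' classes for the lower bound) matches the paper's, but both halves have genuine gaps.

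\textbf{Upper bound.} You spend all $n$ colors of the first block on singleton original classes and reserve no spare class, so the leftover subdivision vertices must be recycled into classes that other vertices are required to totally dominate; you only check properness, not domination, and this is exactly where the construction breaks. For $G=K_n$ ($t=n$, and the bound $n+\lceil n/2\rceil$ is attained exactly) it provably cannot be repaired: every $v_i$ is universal, so its dominated class must lie inside $\{c_{ij}:j\neq i\}$; with only $\lceil n/2\rceil$ subdivision classes, each must serve two originals $v_i,v_j$ and hence equal $\{c_{ij}\}$ exactly, so it can absorb nothing; the remaining $\binom{n}{2}-\lceil n/2\rceil$ subdivisions must then be pushed into the original singletons, but each $c_{pq}$ needs at least one of the classes of $v_p,v_q$ to stay pure, which forces at most one polluted class, and the subdivisions incident to that class's vertex have nowhere to go. The paper avoids all of this by using only $n-1$ classes on the originals (pairing $\{v_1,v_2\}$, which is safe because $c_{12}\succ_t\{v_1,v_2\}$) and dedicating the freed color to a class containing every leftover subdivision vertex, a class that no vertex is required to dominate. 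That one-color transfer is the essential ingredient your construction is missing.

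\textbf{Lower bound.} Your $A$/$B$/$T$ decomposition and the inequality $k\ge n-|B|+[T=\emptyset]$ are correct, and indeed more rigorous than the paper's one-line assertion that two new colors are needed per three vertices. But the estimate you defer, $|B|\le\lceil n/3\rceil$, is false. Take the spider $G$ with center $c$ and $k\ge 2$ legs $c\,x_i\,y_i$ of length two, so $n=2k+1$. The coloring with classes $\{c\}$, $\{x_1,y_1\},\dots,\{x_k,y_k\}$, $\{c_{cx_1}\}$ and $\mathcal{C}\setminus\{c_{cx_1}\}$ is a TDC of $C(G)$: each $c_{x_iy_i}$ dominates $\{x_i,y_i\}$, each $c_{cx_i}$ and each $y_i$ dominates $\{c\}$ (note $cy_i\in E(C(G))$), each $x_i$ dominates $\{x_j,y_j\}$ for $j\neq i$, and $c$ dominates $\{c_{cx_1}\}$. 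Here $|A|=1$, $|B|=k$, $T=\emptyset$, so your claimed bound on $|B|$ fails for $k\ge 3$; worse, this TDC uses $k+3$ colors, which for $k=7$ gives $10<\lfloor 2\cdot 15/3\rfloor+1=11$, so the stated lower bound itself fails, and the paper's own justification collapses at the same point (only $k+1$ classes cover the $2k+1$ originals). The gap in your lower bound is therefore not a missing computation: the route, and the inequality as stated, cannot be completed, and any repair must weaken the bound or restrict the class of graphs.
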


\begin{proof}
Let $ G $ be a connected graph of order $ n\geq 2 $ with the vertex set $ V=\{v_{1}, v_{2},\cdots, v_{n}\}$. Then $V(C(G))=V\cup \mathcal{C}$ where $\mathcal{C}=\{ c_{i  j}~|~ v_{i}v_{j}\in E(G) \}$. If $ n=2$, then $G$ is isomorphic to $K_{2}$ and so $C(G)$ is isomorphic to $P_{3}$ and obviously $ \chi_d ^t(C(G))=\lfloor 2n/3\rfloor +1$. So we assume $n\geq 3$. Let $ f=(V_{1}, V_{2},\cdots, V_{\ell} ) $ be a min-TDC of $ C(G)$. Since $ N(c_{ij})=\{v_{i}, v_{j} \},  $ we conclude that if $ c_{ij}\succ_{t} V_{k} $ for some $ k $, then $ V_{k}\subseteq \{v_{i}, v_{j} \}. $ This implies that at least two new colors are needed for coloring of every three vertices, and also at least one new color is needed for coloring the vertices in $\mathcal{C}$. Hence $ \chi^{t}_{d}(C(G))\geq \lfloor 2n/3\rfloor +1$. 
\vspace{0.2cm}

Now let $P_{t}:v_1v_2 \cdots v_t$ be a longest path of order $t$ in $G$, and we consider
\[
V_{1}=\{v_{1},v_{2}\},  ~V_{i}=\{v_{i+1}\} \mbox{ for } 2\leq i \leq n-1,
\]
\[
V_{n-1+k}=\{c_{(2k-1) ( 2k)}\} \mbox{ for } 1\leq k \leq \lfloor t/2\rfloor,
\]
\[
V_{n+\lfloor t/2\rfloor}= \mathcal{C}- (V_1\cup \cdots \cup V_{n+\lfloor t/2\rfloor -1}).
\]
Since the function $ f=(V_{1}, V_{2},\cdots, V_{n+\lfloor t/2 \rfloor} )$ is a TDC of $C(G)$ for even $t$, and the function $ g=(V_{1}, V_{2},\cdots, V_{n+\lfloor t/2 \rfloor-1}, V^{'}_{n+\lfloor t/2 \rfloor}, V^{''}_{n+\lceil t/2 \rceil} )$ is a TDC of $C(G)$ for odd $t$ where
\[
V^{'}_{n+\lfloor t/2 \rfloor}=V_{n+\lfloor t/2 \rfloor}-\{c_{(t-1)t}\},  ~V^{''}_{n+\lceil t/2 \rceil} )=\{c_{(t-1)t}\},
\]
we have $\chi^{t}_{d}(C(G)\leq n+\lceil t/2 \rceil$.
\end{proof}

The following theorem is a trivial result of Theorem \ref{n/2+2 =< chi_d t(C(G))=<n/2+n} for a graph which has a Hamiltonian path.
\begin{thm}
\label{2n/3+1 =< chi_dt C(G) =< n+n/2}
For any graph $G$ of order $n\geq 2$ which has a Hamiltonian path,
\[
\lfloor 2n/3 \rfloor +1\leq \chi_d ^t(C(G))\leq n+ \lceil n/2 \rceil.
\]
\end{thm}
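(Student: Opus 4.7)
The plan is to observe that this statement is essentially a direct corollary of Theorem \ref{n/2+2 =< chi_d t(C(G))=<n/2+n}, so the only real work is to verify that its hypotheses are satisfied and then substitute the correct value of the parameter $t$.

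First I would note that if $G$ has a Hamiltonian path, then $G$ is automatically connected, since the vertices on the Hamiltonian path form a connected spanning subgraph of $G$. Thus $G$ falls within the scope of Theorem \ref{n/2+2 =< chi_d t(C(G))=<n/2+n}. Next, since a Hamiltonian path visits all $n$ vertices of $G$ exactly once, the longest path in $G$ has order at least $n$; trivially it has order at most $n$, so the parameter $t$ appearing in Theorem \ref{n/2+2 =< chi_d t(C(G))=<n/2+n} equals $n$.

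Substituting $t=n$ into the conclusion $\lfloor 2n/3\rfloor+1 \leq \chi_d^t(C(G))\leq n+\lceil t/2\rceil$ of Theorem \ref{n/2+2 =< chi_d t(C(G))=<n/2+n} immediately yields
\[
\lfloor 2n/3\rfloor +1 \leq \chi_d^t(C(G)) \leq n+\lceil n/2\rceil,
\]
which is the desired inequality. There is no main obstacle here; the argument is purely a matter of checking the hypothesis of connectedness and identifying $t$ with $n$. The only subtle point worth flagging explicitly is the implication ``Hamiltonian path $\Rightarrow$ connected,'' without which one could not invoke the previous theorem.
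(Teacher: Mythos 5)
Your proposal is correct and matches the paper's own reasoning: the paper states this theorem as a ``trivial result'' of Theorem \ref{n/2+2 =< chi_d t(C(G))=<n/2+n}, obtained exactly as you describe by noting that a Hamiltonian path forces connectedness and makes the longest-path parameter $t$ equal to $n$. Nothing further is needed.
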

Since for any connected graph $G$ of order $n\geq 2$ and maximum degree at most $n-2$ the coloring function $(\{v_1\}, \dots, \{v_n\},\mathcal{C})$ is a TDC of $C(G)$ where $ V(G)=\{v_{1}, v_{2},\cdots, v_{n}\}$ and $\mathcal{C}=\{ c_{i  j}~|~ v_iv_j\in E(G) \}$, the upper bound $n+\lceil t/2\rceil$ in Theorem \ref{n/2+2 =< chi_d t(C(G))=<n/2+n} can be improved to $n+1$, as we say in Theorem \ref{2n/3 +1 =< chi_d ^t(C(G)) =<n+1}.

\begin{thm}
\label{2n/3 +1 =< chi_d ^t(C(G)) =<n+1}
For any connected graph $G$ of order $n\geq 2$ and maximum degree at most $n-2$,
\[
\lfloor 2n/3 \rfloor+1 \leq \chi_d ^t(C(G))\leq n+1.
\]
\end{thm}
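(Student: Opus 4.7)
The lower bound already appears in Theorem \ref{n/2+2 =< chi_d t(C(G))=<n/2+n}, so my plan is to focus entirely on the upper bound $\chi_d^t(C(G))\leq n+1$. The strategy is exactly the one suggested in the paragraph preceding the statement: exhibit the explicit coloring
\[
f=(\{v_1\},\{v_2\},\ldots,\{v_n\},\mathcal{C})
\]
of $C(G)$ using $n+1$ color classes, and verify that it is a total dominator coloring.

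First I would check that $f$ is a proper coloring. Recalling $E(C(G))=\{v_ic_{ij},v_jc_{ij}\mid v_iv_j\in E(G)\}\cup\{v_iv_j\mid v_iv_j\notin E(G)\}$, observe that no two vertices of $\mathcal{C}$ are adjacent in $C(G)$, so the class $\mathcal{C}$ is independent; the remaining classes are singletons and hence trivially independent. Since the $n+1$ classes use pairwise distinct colors, no edge can be monochromatic.

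Next I would verify the total dominator condition for each vertex. For a subdivision vertex $c_{ij}\in\mathcal{C}$, its neighborhood in $C(G)$ is exactly $\{v_i,v_j\}$, so $c_{ij}$ completely dominates the singleton color class $\{v_i\}$. For an original vertex $v_i\in V$, the hypothesis $\Delta(G)\leq n-2$ is crucial: it guarantees that $v_i$ has at least one non-neighbor $v_j$ in $G$, and by the definition of $C(G)$ this non-adjacency produces the edge $v_iv_j$ in $C(G)$. Hence $v_i$ is adjacent to $v_j$ in $C(G)$ and therefore completely dominates the singleton class $\{v_j\}$.

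Combining these two verifications, $f$ is a TDC of $C(G)$ with $n+1$ classes, giving $\chi_d^t(C(G))\leq n+1$. The only real obstacle is recognising that the assumption $\Delta(G)\leq n-2$ is precisely what is needed to supply every $v_i$ with a private singleton class it can completely dominate; without this assumption a vertex of degree $n-1$ in $G$ would be isolated from all $\{v_j\}$ in $C(G)$ and would need an additional class to dominate, spoiling the bound.
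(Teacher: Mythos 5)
Your proposal is correct and follows exactly the route the paper takes: the paragraph preceding the theorem asserts that $(\{v_1\},\dots,\{v_n\},\mathcal{C})$ is a TDC of $C(G)$ under the hypothesis $\Delta(G)\leq n-2$, and your verification (properness of $\mathcal{C}$, each $c_{ij}$ dominating $\{v_i\}$, and each $v_i$ dominating some singleton $\{v_j\}$ via a non-neighbor in $G$) simply fills in the details the paper leaves implicit, while the lower bound is inherited from Theorem~\ref{n/2+2 =< chi_d t(C(G))=<n/2+n} in both cases.
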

While Theorem \ref{chi_d ^t(C(G))=n+ lceil n/2 rceil} characterizes graphs $G$ which achieve the upper bound $n+ \lceil n/2 \rceil$ in Theorem \ref{2n/3+1 =< chi_dt C(G) =< n+n/2}, Propositions \ref{chi_d^t C(P_n)} and \ref{chi_d^t(C({C_n}))} show the lower bound given in Theorem \ref{2n/3+1 =< chi_dt C(G) =< n+n/2} is also tight. First a lemma.
\begin{lem}
\label{gamma_{t}(C(K_{n}))}
For any integer $ n\geq 2$, $ \gamma_{t}(C(K_{n}))=n+\lceil\dfrac{n}{2}\rceil -1$.
\end{lem}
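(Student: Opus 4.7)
My plan is to extract two independent lower bounds from the total-domination condition and then match them by an explicit construction. The key structural facts I will use are: $V=\{v_1,\ldots,v_n\}$ is an independent set of $C(K_n)$ (since every pair of original vertices was adjacent in $K_n$); each subdivision vertex $c_{ij}$ has exactly the two neighbors $v_i$ and $v_j$; and the open neighborhood of each $v_i$ is exactly $\{c_{ij}:j\neq i\}\subseteq\mathcal{C}$.

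For the lower bound, I will take an arbitrary TDS $S$ of $C(K_n)$ and derive two inequalities. First, from $N(c_{ij})=\{v_i,v_j\}$, the condition $N(c_{ij})\cap S\neq\emptyset$ forces $\{v_i,v_j\}\cap S\neq\emptyset$ for every pair $i\neq j$; since this must hold for all pairs simultaneously, at most one original vertex can be absent from $S$, which gives $|S\cap V|\geq n-1$. Second, for each $v_i$, total-domination forces some $c_{ij}$ to lie in $S$. Identifying each $c_{ij}\in S\cap\mathcal{C}$ with the edge $v_iv_j$ of $K_n$, the set $S\cap\mathcal{C}$ therefore corresponds to an edge cover of $K_n$; by Gallai's identity its cardinality is at least $n-\lfloor n/2\rfloor=\lceil n/2\rceil$. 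Adding the two inequalities yields $|S|\geq n-1+\lceil n/2\rceil$.

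For the upper bound I will exhibit a TDS of exactly this size. The construction is $S=\{v_1,\ldots,v_{n-1}\}\cup M$, where $M\subseteq\mathcal{C}$ corresponds to a minimum edge cover of $K_n$ chosen so that the edge cover actually covers $v_n$: for even $n$ take $M$ corresponding to a perfect matching on $\{v_1,\ldots,v_n\}$, and for odd $n$ take $M$ corresponding to a perfect matching on $\{v_1,\ldots,v_{n-1}\}$ together with one extra edge incident to $v_n$. A direct check against the structural facts above shows that $S$ is a TDS of the required size $n-1+\lceil n/2\rceil$.

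The only subtle point is ensuring the two lower bounds are simultaneously attainable: when $|S\cap V|=n-1$, the missing original vertex must still be totally dominated by $S$, so the edge cover must include an edge incident to it. Handling this parity bookkeeping is the only non-routine step; once it is done, the matching upper and lower bounds produce the claimed equality $\gamma_t(C(K_n))=n+\lceil n/2\rceil-1$.
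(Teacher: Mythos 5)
Your proposal is correct and follows essentially the same route as the paper: both split the lower bound into $|S\cap V|\ge n-1$ (forced because a subdivision vertex $c_{ij}$ has only $v_i,v_j$ as neighbors) and $|S\cap\mathcal{C}|\ge\lceil n/2\rceil$ (the chosen subdivision vertices must cover all of $V$), and both realize the bound with $\{v_1,\ldots,v_{n-1}\}$ plus a (near-)perfect matching of subdivision vertices. Your justification of the first inequality by noting that two omitted vertices of $V$ would leave $c_{ij}$ undominated is more direct than the paper's neighborhood-counting argument, and your appeal to Gallai's identity is heavier machinery than the paper's simple two-neighbors count, but these are cosmetic differences, not a different proof.
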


\begin{proof}
Let $ K_{n} $ be a complete graph of order $ n\geq 2 $ with the vertex  $ V=\{v_{1}, v_{2},\cdots, v_{n}\}$. Then $V\cup \mathcal{C}$ is the partition of the vertex set of the bipartite graph $ C(K_{n}) $ to the independent sets where $\mathcal{C}=\{ c_{i  j}\mid 1\leq i<j \leq n \}$. Let
$S$ be a TDS of $ C(K_{n})$ and let $|S\cap V|=k$ for some $ 1 \leq k \leq n$. Then
\begin{equation*}
\begin{array}{llll}
|\bigcup_{v_i \in S} N(v_i) | & = & (n-1)+(n-2)+\cdots +(n-k) & \mbox{(since } V\cup \mathcal{C} \mbox{ is partition)} \\
                                             & \geq & n(n-1)/2 & \mbox{(since } \mathcal{C} \subseteq \bigcup_{v_i\in S} N(v_i)),\\
\end{array}
\end{equation*}
which implies $k=n-1$. On the other hand, we have $|S \cap V| \geq \lceil n/2 \rceil$ because $V\subseteq \bigcup_{c_{ij}\in S} N(c_{ij})$. Therefore $|S| \geq n+\lceil n/2 \rceil-1$, which implies $ \gamma_{t}(C(K_{n}))\geq n+\lceil n/2\rceil -1$. Now since 
\[
S=\{ v_{i}\mid 1\leq i \leq n-1\} \cup \{ c_{(2i-1)(2i)} |~ 1 \leq i \leq \lceil n/2\rceil\}
\]
is a TDS of $ C(K_{n}) $ with cardinality $ n+\lceil n/2\rceil -1$, we obtain $ \gamma_{t}(C(K_{n}))=n+\lceil n/2 \rceil -1. $
\end{proof}

\begin{thm}
\label{chi_d ^t(C(G))=n+ lceil n/2 rceil}
For any connected graph $G$ of order $n\geq4$, 
\[
\chi_d ^t(C(G))=n+\lceil n/2\rceil \mbox{ if and only if } G\cong K_{n}.
\]
\end{thm}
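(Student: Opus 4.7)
For the direction $(\Leftarrow)$, assume $G\cong K_n$. The upper bound $\chi_d^t(C(K_n))\leq n+\lceil n/2\rceil$ follows from Theorem~\ref{2n/3+1 =< chi_dt C(G) =< n+n/2}, since $K_n$ has a Hamiltonian path. To prove the matching lower bound I would fix any TDC $f=(V_1,\ldots,V_\ell)$ of $C(K_n)$ and exploit that $C(K_n)$ is bipartite with independent parts $V$ and $\mathcal{C}$. I classify classes as pure-$V$, pure-$\mathcal{C}$, or mixed, and observe that a mixed class is never completely dominated by any vertex, because $N(v_i)\subseteq\mathcal{C}$ and $N(c_{ij})\subseteq V$ have empty intersection with any set straddling $V$ and $\mathcal{C}$. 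The requirement that every $c_{ij}$ completely dominates some $V_k\subseteq\{v_i,v_j\}$ forces pure $V$-classes to be singletons or pairs, and a brief case analysis on the matching of pair-classes rules out more than one pair (two disjoint pairs $\{v_i,v_j\}$ and $\{v_k,v_l\}$ would leave $c_{ik}$ with no dominatable class). Hence at least $n-1$ classes contain $V$-vertices. For each $v_i$, the class it completely dominates is a star centered at $v_i$ inside $\mathcal{C}$; since a singleton star $\{c_{ij}\}$ serves two vertices and a larger star only one, at least $\lceil n/2\rceil$ such ``service'' star-classes are required. The key counting step is that $\lceil n/2\rceil$ stars cannot edge-partition $K_n$ for $n\geq 4$: writing $a+b=\lceil n/2\rceil$ with $b\geq\lfloor n/2\rfloor$ forced by $2b+a\geq n$, the maximum total star-size $b+\sum s_i\leq\lfloor n/2\rfloor+(n-1)$ is strictly less than $\binom{n}{2}$, so at least one additional ``dumping'' $\mathcal{C}$-class is forced. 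Combining yields $\chi_d^t(C(K_n))\geq(n-1)+(\lceil n/2\rceil+1)=n+\lceil n/2\rceil$.

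For $(\Rightarrow)$, let $G$ be connected with $n\geq 4$ and $G\not\cong K_n$. If $\Delta(G)\leq n-2$, Theorem~\ref{2n/3 +1 =< chi_d ^t(C(G)) =<n+1} already gives $\chi_d^t(C(G))\leq n+1<n+\lceil n/2\rceil$, since $\lceil n/2\rceil\geq 2$. Otherwise $G$ contains a universal vertex together with a non-edge $v_av_b$ joining two non-universal vertices, and the extra edge $v_av_b\in E(C(G))$ is the key feature I will exploit. I plan to present an explicit TDC built around the two ``mirror'' classes
\[
V_1=\{v_a\}\cup\{c_{bj}:v_bv_j\in E(G),\, j\neq a\},\qquad V_2=\{v_b\}\cup\{c_{aj}:v_av_j\in E(G),\, j\neq b\},
\]
which are independent in $C(G)$ and satisfy $V_1\subseteq N(v_b)$ and $V_2\subseteq N(v_a)$. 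Thus $v_a$ and $v_b$ completely dominate each other's class, and many subdivision vertices are absorbed into $V_1\cup V_2$ for free. Assigning singleton colors to the remaining original vertices together with a compact service/dumping arrangement for the leftover subdivision vertices then produces a TDC with at most $n+\lceil n/2\rceil-1$ colors.

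The main obstacle is this second subcase of the necessity direction: verifying that the mirror construction actually saves at least one color \emph{uniformly} over all non-complete graphs with a universal vertex. The exact count depends on $\deg_G(v_a)$, $\deg_G(v_b)$, and the induced subgraph $G-\{v_a,v_b\}$, so careful bookkeeping is needed, and small or degenerate cases (in particular $n=4$ and the star $K_{1,n-1}$) may require a tailored construction. Once this construction is verified, the two directions combine to give the claimed equivalence.
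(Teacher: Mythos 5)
Your lower-bound argument for $K_n$ (the $\Leftarrow$ direction) is essentially sound and takes a genuinely different route from the paper: the paper first computes $\gamma_t(C(K_n))=n+\lceil n/2\rceil-1$ (Lemma~\ref{gamma_{t}(C(K_{n}))}), invokes $\chi_d^t\geq\gamma_t$, and then rules out equality by a short structural argument, whereas you count pure-$V$ classes, pure-$\mathcal{C}$ ``service'' stars, and a forced extra $\mathcal{C}$-class directly. Your version is arguably more transparent, though one claim needs tightening: the domination requirement on the $c_{ij}$ does not force \emph{every} class meeting $V$ to be a singleton or a pair --- it forces that for each pair $\{i,j\}$ at least one of $v_i,v_j$ lies in a class contained in $\{v_i,v_j\}$, so at most one vertex of $V$ may sit in a mixed or large class; in that case the extra ``$+1$'' in your final count comes from that mixed class rather than from a dumping class, and the two cases should be separated.

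The genuine gap is in the necessity direction when $\Delta(G)=n-1$. Your mirror construction keeps all $n$ vertices of $V$ in $n$ distinct color classes ($V_1$, $V_2$, and $n-2$ singletons), and the singletons $\{v_j\}$ must stay pure because each $c_{aj}$ and $c_{bj}$ needs a class contained in $\{v_a,v_j\}$ or $\{v_b,v_j\}$ while $v_a,v_b$ sit in mixed classes. Take $G=K_n-v_av_b$: every $v_i$ with $i\notin\{a,b\}$ has $N_{C(G)}(v_i)\subseteq\mathcal{C}$, so it needs a pure-$\mathcal{C}$ star class, at least $\lceil (n-2)/2\rceil$ of them, and these stars must together with possible dump classes partition the $\binom{n-2}{2}$ vertices $c_{ij}$ with $i,j\notin\{a,b\}$. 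The same capacity count you use for the lower bound shows this cannot be done with $\lceil (n-2)/2\rceil$ classes once $n\geq 6$, so your construction lands at $n+\lceil n/2\rceil$ or worse --- it never gets strictly below the threshold. The missing ingredient is the paper's pair class on two $G$-\emph{adjacent} vertices (consecutive on a longest path): since $v_2v_3\in E(G)$ makes $\{v_2,v_3\}$ independent in $C(G)$ and $c_{23}$ still dominates it, the paper colors $V$ with only $n-1$ classes, lets the endpoints of the non-edge $v_1v_n$ dominate each other's singletons (saving a service star), and serves $v_2,\dots$ with $\lceil t/2\rceil-1$ singleton stars plus one dump class, for a total of $n+\lceil t/2\rceil-1\leq n+\lceil n/2\rceil-1$, uniformly over all non-complete connected $G$ and with no case split on $\Delta$. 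Without that pair-class idea your second case cannot be completed as described.
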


\begin{proof}
First we prove that for any non-complete graph $G$ of order $ n\geq 4$, $ \chi_d ^t(C(G))<n+\lceil n/2\rceil$. Let $V(G)=\{v_{1}, v_{2},\cdots, v_{n}\}$ and let $v_1v_n \not\in E(G)$. Then $V(C(G))=V(G)\cup \mathcal{C}$ where $\mathcal{C}=\{ c_{i  j}~|~ v_{i}v_{j}\in E(G)\}$). Let $P_{t}:~v_1v_2\cdots v_t$ be a longest path in $G$ of order $t\leq n$. Since $ f=(V_{1}, V_{2},\cdots, V_{n+\lceil t/2 \rceil-1} )$ is a TDC of $C(G)$ where
\[
V_{1}=\{v_{1}\},  ~V_{2}=\{v_{2},v_{3}\}, ~V_{i}=\{v_{i+1}\} \mbox{ for } 3\leq i \leq n-1,
\]
\[
V_{n-1+k}=\{c_{(2k) ( 2k+1)}\} \mbox{ for } 1\leq k \leq \lceil t/2 \rceil-1,
\]
\[
V_{n+\lceil t/2\rceil-1}= \mathcal{C}- (V_1\cup \cdots \cup V_{n+\lceil t/2 \rceil -2}),
\] 
we have $\chi_d ^t(C(G)< n+\lceil n/2\rceil$.
 
 \vspace{0.2cm}

Now we prove $\chi_d ^t(C(K_n))=n+\lceil n/2\rceil$ where $n\geq 4$. Let $ K_{n} $ be a complete graph of order $ n\geq 4$ with the vertex  $ V=\{v_{1}, v_{2},\cdots, v_{n}\}$. Then $V\cup \mathcal{C}$ is the partition of the vertex set of the bipartite graph $ C(K_{n}) $ to the independent sets where $\mathcal{C}=\{ c_{i  j}\mid 1\leq i<j \leq n \}$. 
Now let $f=(V_{1}, V_{2},\cdots,V_{\ell} ) $ be a min-TDC of $ C(K_{n})$. Then $\ell \geq n+\lceil n/2\rceil -1$ by Proposition \ref{2=<chi_d ^t=<n} and Lemma \ref{gamma_{t}(C(K_{n}))}. Let $ \ell= n+\lceil n/2 \rceil -1$. Since $|\{V_{k} ~|~ v_{i}\succ_{t} V_{k} \mbox{ for } 1\leq i \leq n\}|\geq \lceil n/2\rceil $ and so $|\{f(c_{ij}) ~|~ 1\leq i<j \leq n\}|\geq \lceil n/2\rceil +1$, the assumption $ \ell= n+\lceil n/2 \rceil -1$ implies $|\{f(v_{i}) ~|~ 1\leq i \leq n\}|=n-2$. This forces that there exist two color classes $\{v_{i}, v_{j}\} $ and $\{v_{k}, v_{t}\}$ such that $ N(v_{i})\cap  N(v_{k})=\{c_{ik}\}$ and $c_{ik}\nsucc_{t} V_{i} $ for each $ 1\leq i \leq \ell $, a contradiction. Hence $ \ell \geq n+\lceil n/2 \rceil$. Now since $ f=(V_{1}, V_{2},\cdots,V_{n+\lceil n/2 \rceil}) $ is a TDC of $ C(K_{n}) $ where  
\[
V_{i}=\{v_{i}\} \mbox{ for } 1\leq i \leq n-2, ~~  V_{n-1}=\{v_{n-1}, v_{n} \},
\]
\[
V_{n+i}=\{c_{(2i+1) ( 2i+2)}\}~~~~~ \mbox{ for } 0\leq i \leq \lceil n/2\rceil-1,
\]
\[
V_{n+\lceil n/2 \rceil}= V(C(K_{n}))- (V_1\cup \cdots \cup V_{n+\lceil n/2 \rceil -1}),
\]
we have $ \chi_{d}^{t}(C(K_{n}))=n+\lceil n/2\rceil$. 

\vspace{0.2cm}

In Figure \ref{fi:proofexample3}, $(\{v_1\},\{c_{34}\},\{v_3\},\{v_4\},\{c_{12}\},\{c_{56}\},\{v_5,v_6\},\{c_{1i},c_{2i}~|~3\leq i \leq 6\}\cup \{c_{35},c_{36},c_{45},c_{46}\})$ is a min-TDC of $C(K_{6})$.
\end{proof}

\begin{figure}[ht]
\centerline{\includegraphics[width=4.5cm, height=4cm]{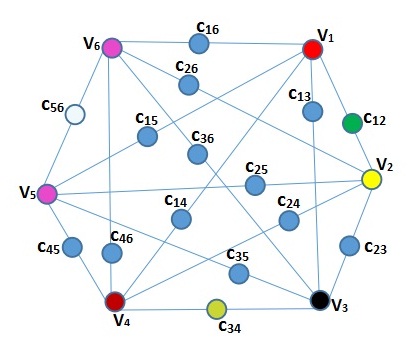}}
\vspace*{-0.3cm}
\caption{A min-TDC of $C(K_{6})$}\label{fi:proofexample3}
\end{figure}

\vspace{0.2cm}

As a remark, obviousely $\chi^{t}_{d}(C({K_n}))=n+\lceil n/2 \rceil -1$ for $n=2,3$ because of
$ C(K_{2})\cong P_{3} $ and $ C(K_{3})\cong C_{6}$.
\vspace{0.2cm}

By reviewing the previous results, Theorem \ref{3=<gama_t(C(G))=< n+n/2-1} can be obtained which improves the upper bound given in Theorem \ref{gamm_t (G) =< 4n/7} \cite{HeYe13} when
\begin{equation*}
m=|E(G)|\geq \left\{
\begin{array}{ll}
(13n-14)/8  & \mbox{for even }n, \\
(13n-7)/8  & \mbox{for odd }n.
\end{array}
\right.
\end{equation*}
\begin{thm}
\label{3=<gama_t(C(G))=< n+n/2-1}
For any connected graph $G$ of order $n\geq 4$,
\[
3\leq \gamma_{t}(C(G))\leq n+\lceil\dfrac{n}{2}\rceil -1.
\]
\end{thm}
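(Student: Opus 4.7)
The plan is to prove the lower and upper bounds separately, invoking results already established earlier in the section.

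For the lower bound $\gamma_t(C(G)) \geq 3$, I start from $\gamma_t(C(G)) \geq 2$ (which holds because $C(G)$ has no isolated vertex whenever $G$ is connected of order at least $2$) and rule out $\gamma_t(C(G)) = 2$ by contradiction. Suppose $S = \{u, v\}$ is a TDS; then $uv \in E(C(G))$, and I would split into three cases on the types of $u, v$. The case $u, v \in \mathcal{C}$ is immediate, since $\mathcal{C}$ is independent in $C(G)$: $N_{C(G)}(c_{ij}) = \{v_i, v_j\} \subseteq V$. The case $u = v_i$, $v = c_{ij}$ forces every remaining subdivision vertex $c_{kl}$ to satisfy $v_i \in \{v_k, v_l\}$, so every edge of $G$ is incident to $v_i$; simultaneously, each $v_k$ with $k \neq i, j$ requires $v_iv_k \notin E(G)$, so $v_i$'s only $G$-neighbor is $v_j$; together these reduce $G$ to the single edge $v_iv_j$ plus $n-2 \geq 2$ isolated vertices, contradicting connectedness. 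The case $u = v_i$, $v = v_j$ with $v_iv_j \notin E(G)$ analogously partitions $V \setminus \{v_i, v_j\}$ into one set adjacent in $G$ only to $v_i$ and another only to $v_j$, with no edges between them, so $G$ splits into two components and is disconnected.

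For the upper bound $\gamma_t(C(G)) \leq n + \lceil n/2 \rceil - 1$, I would split on whether $G \cong K_n$. If $G \cong K_n$, Lemma \ref{gamma_{t}(C(K_{n}))} furnishes the bound directly (with equality). If $G \not\cong K_n$, Theorem \ref{n/2+2 =< chi_d t(C(G))=<n/2+n} gives $\chi_d^t(C(G)) \leq n + \lceil t/2 \rceil \leq n + \lceil n/2 \rceil$ (since $t \leq n$), while Theorem \ref{chi_d ^t(C(G))=n+ lceil n/2 rceil} rules out equality except when $G \cong K_n$, so in fact $\chi_d^t(C(G)) \leq n + \lceil n/2 \rceil - 1$; combined with the inequality $\gamma_t \leq \chi_d^t$ supplied by Theorem \ref{2=<chi_d ^t=<n}, this yields the desired bound.

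The main obstacle is the case analysis in the lower bound: one must use the connectedness of $G$ together with the hypothesis $n \geq 4$ carefully in each of the three configurations to force a contradiction. The upper bound, by contrast, is essentially a concatenation of the previously proven theorems, with the characterization in Theorem \ref{chi_d ^t(C(G))=n+ lceil n/2 rceil} shaving off the required $-1$ in the non-complete case and Lemma \ref{gamma_{t}(C(K_{n}))} handling the complete case on the nose.
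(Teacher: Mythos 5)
Your proof is correct. The paper actually supplies no proof of this theorem at all --- it only remarks that it ``can be obtained by reviewing the previous results'' --- and your upper-bound argument is precisely the intended concatenation: $\gamma_t(C(G))\leq \chi_d^t(C(G))$ from Theorem \ref{2=<chi_d ^t=<n}, $\chi_d^t(C(G))\leq n+\lceil t/2\rceil\leq n+\lceil n/2\rceil$ from Theorem \ref{n/2+2 =< chi_d t(C(G))=<n/2+n}, the exclusion of equality for non-complete $G$ from Theorem \ref{chi_d ^t(C(G))=n+ lceil n/2 rceil}, and Lemma \ref{gamma_{t}(C(K_{n}))} for $G\cong K_n$. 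Your three-case elimination of a two-vertex total dominating set is more than the paper offers for the lower bound, and it is sound: the observations that $\mathcal{C}$ is independent, that $N_{C(G)}(c_{ij})=\{v_i,v_j\}$, and that $v_k$ is adjacent to $v_i$ in $C(G)$ exactly when $v_iv_k\notin E(G)$ do force $G$ to be disconnected in each configuration, contradicting the hypothesis.
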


The next theorem gives some lower and upper bounds for the total dominator chromatic number of the central of a disconnected graph in which it is supposed that none of its connected components is $K_1$. See Theorem \ref{join graph} when one connected component of the graph is a single vertex. 

\begin{thm}
\label{n-w+1}
Let $G$ be a graph of order $n\geq 2$ with $\delta(G)\geq 1$. If $G=G^1 \cup G^2\cup \cdots \cup G^w$, that is $ G^{1}$, $G^{2}$, $\cdots$, $G^{w}$ are all connected components of $ G $, for some $w\geq 2$, then $\chi_d ^t(C(G))$ has the following tight bounds:
\[
\sum\limits_{i=1}^{w}\lfloor (2|G^{i}|/3 \rfloor +1 \leq \chi_d ^t(C(G)) \leq n+w-1.
\]
\end{thm}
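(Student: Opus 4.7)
The key new ingredient beyond the connected case is a \emph{component-confinement} principle: in any TDC of $C(G)$, any color class contained in $V$ must lie inside a single $V^i$. Indeed, two vertices $v_a\in V^i$ and $v_b\in V^j$ with $i\neq j$ are non-adjacent in $G$ and therefore adjacent in $C(G)$, so they cannot share a color. This allows both inequalities to be split along the components.

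For the lower bound, fix a min-TDC $f=(V_1,\ldots,V_\ell)$ of $C(G)$. Since $N_{C(G)}(c_{ab})=\{v_a,v_b\}\subseteq V^i$ for every $c_{ab}\in\mathcal{C}^i$, each color class completely dominated by such a subdivision vertex is a subset of $V^i$ of size at most two. Replaying the ``two new colors for every three original vertices'' argument from Theorem~\ref{n/2+2 =< chi_d t(C(G))=<n/2+n} locally inside each component forces at least $\lfloor 2n_i/3\rfloor$ colors to appear on the vertices of $V^i$. The component-confinement principle guarantees that these color families are pairwise disjoint across $i$, so summation contributes $\sum_{i=1}^{w}\lfloor 2n_i/3\rfloor$ colors used on $V$. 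Finally, at least one further color is necessary to cover $\mathcal{C}$, exactly as in the connected case, yielding $\ell\geq\sum_{i=1}^{w}\lfloor 2n_i/3\rfloor+1$.

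For the upper bound, consider the explicit coloring
\[
f=\bigl(\{v_1\},\{v_2\},\ldots,\{v_n\},\mathcal{C}\bigr),
\]
which uses $n+1$ colors. It is a proper coloring because $\mathcal{C}$ is independent in $C(G)$. It is a TDC because every $c_{ab}\in\mathcal{C}$ completely dominates $\{v_a\}$, and every $v_k\in V^i$ completely dominates $\{v_j\}$ for any $v_j\in V^{i'}$ with $i'\neq i$; such a $v_j$ exists since $w\geq 2$ and is a neighbor of $v_k$ in $C(G)$ because $v_k$ and $v_j$ lie in different components of $G$. Therefore $\chi_d^t(C(G))\leq n+1\leq n+w-1$, where the last inequality uses $w\geq 2$.

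The delicate step is the formalization of the component-wise lower bound: one must verify that the $\lfloor 2n_i/3\rfloor$ colors forced within each $V^i$ are genuinely confined to $V^i$ and are not silently reused to cover vertices of $\mathcal{C}$, so that the global ``$+1$'' is not absorbed into any single component's contribution. Tightness of the two bounds can then be witnessed by disjoint unions such as $wK_2$ (saturating the lower bound, as the example $G=2K_2$ shows $\chi_d^t(C(G))=3=\sum_{i=1}^2\lfloor 4/3\rfloor+1$) and by suitable small disjoint unions of complete graphs for the upper bound.
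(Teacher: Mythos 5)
Your lower bound is essentially the paper's argument: you replay the counting from the connected case inside each component and add one color for $\mathcal{C}$. Your component-confinement observation (vertices in different components of $G$ are adjacent in $C(G)$, so no color class inside $V$ can straddle two components) is correct and makes explicit the disjointness of the per-component color families that the paper takes for granted; the per-component count itself rests on the same fact used in the paper, namely that a class dominated by $c_{ab}$ must be a subset of $\{v_a,v_b\}$. So far, same route.

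The divergence, and the genuine gap, is in the upper bound. The paper does not settle for the coloring $(\{v_1\},\cdots,\{v_n\},\mathcal{C})$: it picks an edge in each component $G^i$ (one exists since $\delta(G)\geq 1$), gives its two endpoints a common color, gives every other vertex of $G^i$ its own color, and uses one final color for all of $\mathcal{C}$, yielding a TDC with $\sum_{i=1}^{w}(n_i-1)+1=n-w+1$ colors. Your coloring is a valid TDC but uses $n+1$ colors; while $n+1\leq n+w-1$ for $w\geq 2$, so the displayed inequality is formally verified, this misses the real content. The theorem asserts the bounds are \emph{tight}, and the paper's sharpness example $G=K_{n_1}\cup\cdots\cup K_{n_w}$ attains $n-w+1$; since the paper's construction shows $\chi_d^t(C(G))\leq n-w+1$ for \emph{every} graph satisfying the hypotheses, the value $n+w-1$ is never attained when $w\geq 2$ (the displayed bound is evidently a misprint for $n-w+1$, which is what the proof establishes). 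Consequently your closing claim that ``suitable small disjoint unions of complete graphs'' witness tightness of the upper bound is false for the bound you proved: those examples give $n-w+1<n+w-1$. To recover the theorem as intended you need the paper's stronger construction, together with its accompanying argument that for unions of complete graphs every min-TDC must use $n_i-1$ colors on each component, pairwise distinct across components, plus at least one more on $\mathcal{C}$.
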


\begin{proof}
Let $G=G^1 \cup G^2\cup \cdots \cup G^w$ be a graph of order $n\geq 2$ with $\delta(G)\geq 1$ in which $ G^{1}$, $G^{2}$, $\cdots$, $G^{w} $ are all connected components of $G$ and $|G^i|=n_i\geq 2$ for $1\leq i \leq w$. Obviously $C(G)$ is a graph which is obtained by replacing every maximal independent set of cardinality $n_i$ in $K_{n_1, n_2,\cdots,n_m}$ by $C(G^i)$. If $ V(G^{i})=\{v^{i}_{1}, v^{i}_{2},\cdots, v^{i}_{n_i}\}$ and $ \mathcal{C}_i=\{c^i_{i'j'}~|~ v^i_{i'}v^i_{j'}\in E(G^i) \}$ for $1\leq i \leq w$, then 
\[
V(C(G))=V(G^1)\cup V(G^2)\cup \cdots \cup V(G^w) \cup \mathcal{C}_1\cup \cdots \cup \mathcal{C}_w.
\] 
Let $ f $ be a coloring function on $ V(C(G)) $ such that for any $ 1\leq i\leq w $, 
\[
f(v^{i}_{n_{i} -1})=f(v^{i}_{n_i})=\sum\limits_{j=1}^{i}n_{j}-i , 
\]
\begin{equation*}
f(v^{i}_{k})=\left\{
\begin{array}{ll}
k         & \mbox{ if } i=1,\\
\sum\limits_{j=1}^{i-1}n_{j}-i+k+1       & \mbox{ if } i\geq 2,
\end{array}
\right.
(\mbox{where } 1\leq k\leq n_i -2), 
\end{equation*}
\[
f(c^{i}_{i^{'}  j^{'}})=\sum\limits_{j=1}^{w}n_{j}-w+1 ~\mbox{ for } c^{i}_{i^{'}  j^{'}} \in \mathcal{C}_i.
\] 
Since $ f $ is a TDC of $C(G)$, $\chi_d ^t(C(G))\leq \sum\limits_{i=1}^{w}n_{i}-w+1$. 

\vspace{0.15cm}
As we saw in the proof of Theorem \ref{n/2+2 =< chi_d t(C(G))=<n/2+n}, at least $\lfloor 2n_i/3 \rfloor$ new colors are needed to color the vertices of $G^i$. Since also a new color is needed to color the vertices of $\mathcal{C}$, we obtain $\chi_d ^t(C(G)) \geq \sum\limits_{i=1}^{w}\lfloor 2n_i/3\rfloor +1 $.

\vspace{0.2cm}

This upper bound is sharp for $ G=K_{n_1} \cup K_{n_2}\cup \cdots \cup K_{n_w}$. Because it can be easily proved that in every min-TDC of $C(G)$ the number of needed colors to color the vertices of each $K_{n_i}$ are $n_i -1$ that do not appear in the other components. So $ \chi_d^t(C(G)) \geq n-w$. On the other hand, since at least one color is needed to color the vertices in $\mathcal{C}_1\cup \cdots \cup \mathcal{C}_w$, we obtain $\chi_d^t(C(G))=n-w+1$. Also, the lower bound is sharp for $G=P_{n_1} \cup P_{n_2}\cup \cdots \cup P_{n_w}$ when each $n_i\not\equiv 1 \pmod 3$ and $n_i \geq 6$, and also for $G=C_{n_1} \cup C_{n_2}\cup \cdots \cup C_{n_w}$ when each $n_i \equiv 0 \pmod 3$ and $n_i \geq 6$.
\end{proof}

The reader can easily prove that for any nontrivial connected graph $G$, $ \chi_d ^t(C(G))=2$ if and only if $G\cong K_2$, and 
for any nontrivial connected graph $G\not\cong K_2$, $ \chi_d ^t(C(G))\geq 4$. So 

\begin{rem}
There is no connected graph of order $n\geq 2$ with $ \chi_d ^t(C(G))=3$.
\end{rem}

\begin{prop}
\label{chi_d t(C(G))=n}
For any $n\neq 1,3$, there exists a connected graph $ G $ of order $n$ with $\chi_d ^t(C(G))=n $.
\end{prop}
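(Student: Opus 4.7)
The plan is, for each admissible $n$, to exhibit one connected graph $G$ of order $n$ with $\chi_{d}^{t}(C(G))=n$. When $n=2$ I take $G=K_{2}$: its central graph $C(K_{2})\cong P_{3}$ is complete bipartite, so $\chi_{d}^{t}(C(K_{2}))=2$ by Theorem \ref{2=<chi_d ^t=<n}. For every $n\geq 4$ I would use $G=K_{1,n-1}$, the star with centre $v_{0}$ and leaves $v_{1},\ldots,v_{n-1}$. Note that in $C(K_{1,n-1})$ the leaves span a clique $K_{n-1}$, each leaf $v_{i}$ is attached to the centre only through the subdivision vertex $c_{0i}$, and the $c_{0i}$ are pairwise non-adjacent.

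For the upper bound I would check that
\[
V_{1}=\{v_{0},v_{1}\},\quad V_{i}=\{v_{i}\}\text{ for }2\leq i\leq n-1,\quad V_{n}=\{c_{01},c_{02},\ldots,c_{0,n-1}\}
\]
is a TDC of $C(K_{1,n-1})$. Each class is clearly independent (in particular $v_{0}v_{1}\notin E(C(G))$, since it was an edge of $G$), and every vertex totally dominates some class: $v_{0}\succ_{t}V_{n}$; each leaf $v_{i}$ totally dominates any singleton $V_{j}=\{v_{j}\}$ with $j\in\{2,\ldots,n-1\}\setminus\{i\}$ (such a $j$ exists because $n\geq 4$, and the leaves form a clique in $C(G)$); $c_{01}\succ_{t}V_{1}$; and $c_{0i}\succ_{t}V_{i}$ for $2\leq i\leq n-1$. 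Hence $\chi_{d}^{t}(C(K_{1,n-1}))\leq n$.

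The main obstacle is the lower bound $\chi_{d}^{t}(C(K_{1,n-1}))\geq n$, which I would prove by contradiction. Assume a TDC with $n-1$ colours exists. Because the leaves span $K_{n-1}$, they exhaust the $n-1$ colours, so after relabelling $f(v_{i})=i$. The centre $v_{0}$ then shares its colour with some leaf $v_{k}$; since every $c_{0j}$ is adjacent to $v_{0}$, no $c_{0j}$ carries colour $k$, so $V_{k}=\{v_{0},v_{k}\}$. For each $i\neq k$, $N(c_{0i})=\{v_{0},v_{i}\}$ forces $c_{0i}$ to totally dominate a class contained in $\{v_{0},v_{i}\}$; the only candidate is $V_{i}$, which since $v_{0}\in V_{k}$ must reduce to $V_{i}=\{v_{i}\}$. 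But then any subdivision vertex $c_{0j}$ has colour in $\{1,\ldots,n-1\}\setminus\{k,j\}$, say $i$, forcing $c_{0j}\in V_{i}=\{v_{i}\}$, which is impossible. This contradiction is the crux of the argument: once the rigid forcing of each $V_{i}$ ($i\neq k$) to the singleton leaf class is established, there is simply no room left to house the $n-1$ subdivision vertices, and so $\chi_{d}^{t}(C(K_{1,n-1}))=n$ as claimed.
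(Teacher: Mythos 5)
Your proof is correct, but it takes a genuinely different route from the paper's. The paper realizes the value $n$ on a dense graph, namely $K_n$ minus a near-perfect matching together with one or two extra deleted edges, and its lower bound rests on showing that any TDC of the central graph can contain at most one doubleton colour class inside $V(G)$ (two doubletons $\{v_i,v_j\}$ and $\{v_k,v_t\}$ would leave the subdivision vertex $c_{ik}$ with no class to completely dominate), which forces at least $n-1$ colours on $V(G)$ plus one more for $\mathcal{C}$. You instead use the sparse example $G=K_{1,n-1}$: there the leaves form a clique $K_{n-1}$ in $C(G)$, which hands you $n-1$ distinct colours immediately, and your pigeonhole argument — every class $V_i$ with $i\neq k$ is forced to be the singleton $\{v_i\}$, leaving no admissible colour for any $c_{0j}$ — cleanly supplies the $n$-th colour. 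Both arguments are sound; yours is arguably more economical, since the clique on the leaves does most of the work, whereas the paper's choice of graph requires the more delicate "at most one doubleton" analysis. It is also worth noting that your example is already implicit in the paper: Proposition \ref{chi_d ^t (C(K_{m,n}))} (Case 1, $m=1$) establishes $\chi_d^t(C(K_{1,n-1}))=n$ for $n\geq 4$, so with your choice of $G$ the present proposition becomes an immediate corollary of that later result, at the cost of not exhibiting the denser extremal graphs the authors seem to want to record.
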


\begin{proof}
Since obviousely $\chi_d ^t(C(K_2))=2$, we assume $n\geq 4$. We show that $\chi_d ^t(C(G))=n$ where 
\[
G=K_n-(\mbox{ a path } P_2 \mbox{ and a maximum matching) for even } n,
\]
\[
G=K_n-(\mbox{ a path } P_3 \mbox{ and a maximum matching) for odd } n.
\]
Without loss of generality, let
\[
G=K_{n}-(\{v_{1}v_{4}\}\cup \{v_{2i-1}v_{2i}~|~ 1\leq i \leq \lfloor n/2 \rfloor\}) \mbox{ for even } n,
\]
\[
G=K_{n}-(\{v_{1}v_{4},v_{4}v_{n}\} \cup \{v_{2i-1}v_{2i}~|~ 1\leq i \leq \lfloor n/2 \rfloor\}) \mbox{ for odd } n.
\]
Then for any TDC $f=(V_{1}, V_{2},\cdots,V_{\ell} ) $ of $ C(G)$, the number of color classes $V_i\subset V(G)$ of cardinality one is at least $n-2$. Because in the otherwise, there exist two color classes $V_1=\{v_{i}, v_{j}\} $ and $V_2=\{v_{k}, v_{t}\}$ such that $ N(v_{i})\cap  N(v_{k})=\{c_{ik}\}$ and $c_{ik}\nsucc_{t} V_{m} $ for each $ 1\leq m \leq \ell $, a contradiction. Therefore $ |\{f(v_{i})~|~ 1\leq i \leq n\}|\geq n-1 $, and since at least one new color is needed to color some vertices in $\mathcal{C}=\{c_{ij}~|~v_iv_j\in E(G)\}$, we have $\ell \geq n$. Now since 
\[
f=(\{v_{1}\},\{v_{2}, v_{3} \},\{v_4\},\cdots, \{v_n\},\mathcal{C}) 
\]
is a TDC of $ C(G) $, we have $ \chi_{d}^{t}(C(G)=n$. In Figure \ref{fi:proofexample8}, $(\{v_1\},\{v_2,v_3\},\{v_4,v_5\},\{v_6\},\mathcal{C})$ is a min-TDC of the left graph, and $(\{v_1\},\{v_2,v_3\},\{v_4\},\{v_5\},\{v_6\},\{v_7\},\mathcal{C})$ is a min-TDC of the right graph.
\end{proof}

\begin{figure}[ht]
\centerline{\includegraphics[width=8cm, height=4cm]{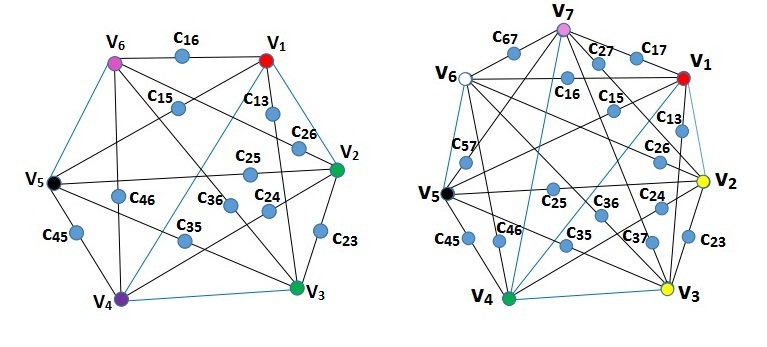}}
\vspace*{-0.5cm}
\caption{A min-TDC of the central of two graphs of orders $n=6,7$ with $n$ colors}\label{fi:proofexample8}
\end{figure}
\section{The join of two graphs}

Here, we will find bounds for the total dominator chromatic number of the central of join of a graph with an empty graph $K_t$. We recal that the \emph{join} $G\circ H$ of two graphs $G$ and $H$ is the graph obtained by the disjoint union of $G$ and $H$ joining each vertex of $G$ to all vertices of $H$.

\begin{thm}
\label{join graph}
For any graph $G$ of order $ n\geq2 $,
\[
\chi^{t}_{d}(C(G) )+t \leq \chi^{t}_{d}(C(G\circ \overline{K_{t}}) )\leq \chi^{t}_{d}(C(G) )+t+1.
\]
\end{thm}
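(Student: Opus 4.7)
Write $U=\{u_1,\ldots,u_t\}$ for the vertex set of $\overline{K_t}$ and $c'_{ij}$ for the subdivision vertex of the new edge $v_iu_j\in E(G\circ \overline{K_t})$, so that $V(C(G\circ \overline{K_t}))=V(C(G))\cup U\cup\{c'_{ij}\}$. Two structural facts will be used throughout: the $u_j$'s are pairwise non-adjacent in $\overline{K_t}$ and hence pairwise adjacent in $C(G\circ \overline{K_t})$, forming a clique $K_t$; and each $c'_{ij}$ has $N(c'_{ij})=\{v_i,u_j\}$. The plan for the upper bound is to take a min-TDC $(V_1,\ldots,V_k)$ of $C(G)$ with $k=\chi_d^t(C(G))$ and extend it by $t+1$ fresh color classes: $V_{k+j}=\{u_j\}$ for $1\le j\le t$, and $V_{k+t+1}=\{c'_{ij}:1\le i\le n,\,1\le j\le t\}$. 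Properness is immediate, since the $t$ new colors on the clique $U$ are pairwise distinct and the $c'_{ij}$'s are mutually non-adjacent with the color $k+t+1$ different from the colors of $v_i$ and $u_j$. Every vertex of $V(C(G))$ retains its original dominated class, since $C(G)$ sits as an induced subgraph of $C(G\circ \overline{K_t})$; every $u_j$ totally dominates the singleton $\{u_{j'}\}$ for any $j'\ne j$ when $t\ge 2$ (or the entire class $V_{k+2}=N(u_1)$ when $t=1$); and every $c'_{ij}$ totally dominates $\{u_j\}$. This produces a TDC of $C(G\circ \overline{K_t})$ with $k+t+1$ color classes.

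\textbf{Plan for the lower bound.} Let $f=(V_1,\ldots,V_\ell)$ be a min-TDC of $C(G\circ \overline{K_t})$. The plan is to extract from $f$ a TDC of $C(G)$ using at most $\ell-t$ colors, which will force $\ell\ge \chi_d^t(C(G))+t$. Because $U$ is a clique, its vertices receive $t$ pairwise distinct colors $\alpha_1,\ldots,\alpha_t$. The candidate coloring of $C(G)$ is the restriction $f|_{V(C(G))}$, which is automatically proper since $C(G)$ is an induced subgraph of $C(G\circ \overline{K_t})$. Two things must then be verified: (i) the colors $\alpha_1,\ldots,\alpha_t$ can be assumed not to appear on $V(C(G))$, so the restriction uses at most $\ell-t$ colors, and (ii) every vertex of $V(C(G))$ totally dominates some class of the restriction. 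Subdivision vertices $c_{i'j'}$ of $G$-edges inherit (ii) for free because their $f$-dominated class already lies in $\{v_{i'},v_{j'}\}\subseteq V(C(G))$; the only way (ii) can fail is for some $v_i\in V(G)$ whose $f$-dominated class $V_{k_i}$ is contained entirely in $\{c'_{ij}:1\le j\le t\}$, making $V_{k_i}\cap V(C(G))=\emptyset$.

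\textbf{Main obstacle.} The technical heart of the lower bound is a local recoloring argument on $f$ that enforces (i) and (ii) without increasing $\ell$. For (i), any $v\in V(C(G))$ carrying color $\alpha_j$ is swapped into a non-$U$ class; this is safe because $u_j$ has no neighbor in $V(C(G))$, so properness at $u_j$ and $v$ is preserved, and the usual case-check shows no other vertex loses its dominated class. For (ii), whenever $V_{k_i}\subseteq \{c'_{ij}:j\}$, one chooses $c'_{ij_0}\in V_{k_i}$ and recolors it with an already-used color compatible with its only two neighbors $v_i,u_{j_0}$; this supplies $v_i$ with an alternative dominated class that meets $V(C(G))$. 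The main difficulty I expect is verifying that these local swaps can be performed consistently across all problematic vertices without simultaneously stripping some other vertex of its unique dominated class; the rigid split of $C(G\circ \overline{K_t})$ into $V(C(G))$ and $U\cup\{c'_{ij}\}$, together with each $c'_{ij}$ having degree only $2$, is the structural feature that should make the case analysis tractable. Once (i) and (ii) are secured, the restriction is a TDC of $C(G)$ with at most $\ell-t$ colors, yielding $\chi_d^t(C(G))\le \ell-t$ and completing the lower bound.
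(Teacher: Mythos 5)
Your upper bound is complete and is exactly the paper's construction: append to a min-TDC of $C(G)$ the $t$ singletons $\{u_j\}$ together with the single class $\mathcal{C}_2$ of new subdivision vertices, and observe that each $u_j$ dominates another singleton $\{u_{j'}\}$ (or, for $t=1$, the class $\mathcal{C}_2=N(u_1)$) while each $c'_{ij}$ dominates $\{u_j\}$. That half is fine.

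The lower bound, however, is only a plan, and the two steps you defer are precisely where all the work lies, so as written there is a genuine gap. For (i), the color class $A_j$ of $u_j$ really can meet $V(C(G))$: in $C(G\circ\overline{K_t})$ the vertex $u_j$ is non-adjacent to every vertex of $V(G)\cup\mathcal{C}_1$, so $A_j=\{u_j,v_i\}$ is a legitimate class, and then $c'_{ij}$ may dominate only this class. To ``swap $v_i$ into a non-$U$ class'' you must exhibit a target class $V_m$ with $V_m\cap N(v_i)=\emptyset$ such that every vertex of $V(C(G))$ whose only dominated class is $V_m$ is adjacent to $v_i$; you give no reason such a class exists, and creating a fresh singleton $\{v_i\}$ costs a color and destroys the count $\ell-t$. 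For (ii), the fix you describe is circular: recoloring $c'_{ij_0}$ with an already-used color $\beta$ helps $v_i$ only if the class $V_\beta$ is already contained in $N(v_i)$ and meets $V(C(G))$ --- but the hypothesis of case (ii) is exactly that $v_i$ dominates no such class. The workable repair (and what the paper does in its Case 2) goes the other way: enlarge the problematic class $V_{k_i}\subseteq\mathcal{C}_2$ by recoloring some $c_{iq}\in\mathcal{C}_1\cap N(v_i)$ into it, so that its trace on $V(C(G))$ becomes $\{c_{iq}\}$; even this needs $v_i$ to have a neighbor in $G$ and a check that removing $c_{iq}$ from its old class strands no other vertex. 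Finally, note that the paper treats $t=1$ by a separate and much longer argument (its ``Changing Color Algorithm'' with several subcases), precisely because these interactions are most delicate there; your uniform treatment does not engage with that case at all. Until (i) and (ii) are actually carried out consistently across all problematic vertices, the inequality $\chi_d^t(C(G\circ\overline{K_t}))\ge\chi_d^t(C(G))+t$ has not been established.
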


\begin{proof} For any integers $ n\geq 2 $ and $t\geq 1$, let $G$ be a graph with $V(G)=\{v_{1},\cdots,v_{n}\} $ and let $V(\overline{K_{t}})=\{v_{n+1},\cdots,v_{n+t}\} $. Then $V(C(G\circ \overline{K_{t}}))=V(G\circ \overline{K_{t}})\cup \mathcal{C}_{1}\cup \mathcal{C}_{2}$ where $\mathcal{C}_{1}=\{ c_{i  j}~|~ v_i v_j \in E(G) \}$ and $\mathcal{C}_{2}=\{ c_{(n+i)j}~|~1\leq i \leq t, ~~1\leq j \leq n \}$. Since for any min-TDC $ f=(V_{1}, V_{2},\cdots,V_{\ell} )$ of $C(G) $, the coloring function $g=(V_{1}, V_{2},\cdots,V_{\ell}, \mathcal{C}_2, \{v_{n+1}\},\cdots,\{v_{n+t}\})$ is a TDC of 
$C(G\circ \overline{K_{t}})$, we have $\chi^{t}_{d}(C(G\circ \overline{K_{t}}) )\leq \chi^{t}_{d}(C(G) )+t+1$, as desired.

\vspace{0.2cm}

Now we prove the lower bound in the following two cases.
\vspace{0.2cm}

\textbf{Case 1. } $t=1$. Let $ f=(V_{1}, V_{2},\cdots,V_{\ell} ) $ be  a min-TDC of $ C(G\circ K_{1}) $ such that $ v_{n+1}\in V_{\ell } $ and $v_{n+1}\succ_{t} V_{\ell -1}$ (and so $ V_{\ell-1}\subseteq \mathcal{C}_2$). First we state an algorithm.

\vspace{0.2cm}

 \textbf{Changing Color Algorithm (CCA):}\\
- step 1. Choose a vertex $v_i$ with this property that $ v_{i}\succ_{t} V_{j} $ implies $V_{j}=\{ c_{(n+1) i}\}$.\\
- step 2. If $v_i$ is a vertex choosen in step 1, change the color of one vertex $w_i$ in $N_{C(G)}(v_i)$ to the color $f(c_{(n+1)i})$.

\vspace{0.2cm}

If $ |V_{\ell }|=1$, then by using CCA we find a TDC of $C(G)$ with $\ell-1$ colors $ 1,2,\cdots,\ell -1 $, and so $ \chi^{t}_{d}(C(G) )\leq \ell -1  $, as desired. So, we assume $ |V_{\ell }|\geq 2$. Let 
\[
T=\{v_{i}\in V(G)~|~v_{i}\succ_{t} V_{j} ~\mbox{implies}~V_{j}=\{c_{(n+1) i}\}\}.
\]
Since the restriction of $f$ on $V(C(G))$, that is,
\[
f_{|_{V(C(G))}}=(V_{1}\cap V(C(G)),\cdots,V_{\ell-2}\cap V(C(G)),V_{\ell}\cap V(C(G))),
\]
is a TDC of $ C(G) $ with $ \ell -1 $ color classes when $ T=\emptyset $, and so $ \chi^{t}_{d}(C(G) )\leq \ell -1  $, as desired, we assume $ T\neq \emptyset $. By using CCA and restriction of $f$ on $V(C(G))$, we obtain a TDC $f_{0}$ of $C(G)$ with $\ell$ color classes. In the following subcases, by improving $ f_{0}$, we will find a TDC of $ C(G) $ with at most $ \ell-1 $ color classes, and this completes our proof.

 \vspace{0.15cm}
 
 \textbf{Subcase 1.1.} Let $ V_{\ell }\cap\{v_{1},\cdots,v_{n}\}\neq \emptyset $. Without loss of generality, we may assume $ v_{1} \in T $, and $ v_{k} \in V_{\ell } $. Then $ V_{\ell }=\{v_{n+1},v_{k}\} $ because of $ N_{C(G\circ K_{1})}(c_{(n+1)k})=\{v_{n+1},v_{k}\} $ and $ c_{(n+1)k} \succ_{t} V_{\ell} $. Let $ v_{i} \in T-\{v_{k}\} $ for some $i$. If $v_{i}v_{k}\in E(G)$, then $ |V_{t}|\geq2 $ (by assumption $c_{ik}\in V_t$ and the definition of $ T $). Hence the coloring function $ g $ on $ C(G) $ with the criterion
 \begin{equation*}
g(x)=\left\{
\begin{array}{ll}
f_{0}(x)                                          & \mbox{if }x \not\in V_{t}, \\
f(c_{(n+1)i})  & \mbox{if }x \in V_{t}.
\end{array}
\right.
\end{equation*}
is a TDC of $ C(G) $ with $ \ell-1 $ color classes, as desired. Also if $ v_{i}v_{k}\not\in E(G) $, then $ v_{i}v_{k}\in E(C(G)) $, and the coloring function $ h $ on $ C(G) $ with the criterion
\begin{equation*}
h(x)=\left\{
\begin{array}{ll}
f_{0}(x)   & \mbox{if }x \neq v_{k},\\
f(c_{(n+1)i})  & \mbox{if }x = v_{k}, 
\end{array}
\right.
\end{equation*}
is a TDC of $ C(G) $ with $ \ell-1 $ color classes, as desired. Therefore, we consider $ T=\{v_{k}\}=\{v_{1}\} $. Then the coloring function $ p $ on $ C(G) $ with the criterion
\begin{equation*}
p(x)=\left\{
\begin{array}{ll}
f_{0}(x)   & \mbox{if }x \neq v_{k},\\
\ell-1       & \mbox{if }x = v_{k}, 
\end{array}
\right.
\end{equation*}
is a TDC of $ C(G) $ with $ \ell-1 $ color classes, as desired.
\vspace{0.15cm}

\textbf{Subcase 1.2.} $ V_{\ell }\cap\{v_{1},\cdots,v_{n}\}=\emptyset $. Then $ Q=\{c_{ij}~|~ v_{i}v_{j}\in E(G)\}\cap V_{\ell }\neq \emptyset $, and so the function
\begin{equation*}
q(x)=\left\{
\begin{array}{ll}
f_{0}(x)   & \mbox{if }x \not\in Q,\\
\ell-1       & \mbox{if }x \in Q, 
\end{array}
\right.
\end{equation*}
 is a TDC of $ C(G) $ with $ \ell-1 $ color classes, as desired.
\vspace{0.2cm}

\textbf{Case 2.} $t\geq2 $. Let $f=(V_{1},\cdots,V_{\ell})$ be a min-TDC of $C(G\circ \overline{K_{t}})$. Let $v_i \succ V_j$ for some $1 \leq i \leq n$ and $1\leq j \leq \ell$ such that $V_j\cap \mathcal{C}_2\neq \emptyset$. We see that if $|V_j|\geq 2$, then $v_i \succ V_j'$ where $V_j'=V_j-\mathcal{C}_2$, and if $V_j=\{c_{(n+i)m}\}$ for some $c_{(n+i)m}\in \mathcal{C}_2$, then there exists a vertex $c_{iq}\in \mathcal{C}_1$ such that by changing its color to the color $f(c_{(n+i)m})$ we have $v_i \succ V_j'$ where $V_j'=\{c_{iq}\}$. This shows that $f_{|_{V(C(G))}}$, the restriction of $f$ on $V(C(G))$, is a TDC of $C(G)$. On the other hand, we know that
\[
|\{f(n+i)~|~ 1 \leq i \leq t\}|=t  \mbox{ and}
\]
\[
\{f(x)~|~x\in V(C(G))\} \cap \{f(n+i)~|~ 1 \leq i \leq t\}=\emptyset.
\]
Therefore $\chi^{t}_{d}(C(G\circ \overline{K_{t}}) ) \geq \chi^{t}_{d}(C(G) )+t$.
\end{proof}


\section{Nordhaus-Gaddum-like relations}

Finding a Nordhaus-Gaddum-like relation for any parameter in graph theory is one of a tradition work which is started after the following theorem by Nordhaus and Gaddum in 1956 \cite{Nordhaus}.

\begin{thm}
\label{Nordhaus and Gaddum}
\emph{\cite{Nordhaus}} For any graph $G$ of order $n$, $2\sqrt{n}\leq \chi(G)+\chi(\overline{G}) \leq n+1$.
\end{thm}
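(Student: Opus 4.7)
The plan is to prove the upper bound $\chi(G)+\chi(\overline{G})\leq n+1$ by induction on $n$, and the lower bound $2\sqrt{n}\leq \chi(G)+\chi(\overline{G})$ via the multiplicative inequality $\chi(G)\chi(\overline{G})\geq n$ followed by AM--GM.

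For the upper bound, the base case $n=1$ is immediate since both sides equal $2$. For the inductive step, fix a vertex $v$ and try to show that either $\chi(G)\leq \chi(G-v)$ or $\chi(\overline{G})\leq \chi(\overline{G}-v)$ for some choice of $v$. The key observation is that if $\deg_G(v)<\chi(G-v)$, then in any proper coloring of $G-v$ using $\chi(G-v)$ colors some color is unused among the neighbors of $v$, hence $v$ can be recolored without enlarging the palette, giving $\chi(G)\leq \chi(G-v)$. Applying induction to $G-v$ in tandem with the analogous inequality for $\overline{G}$ then yields the bound $n+1$. The potential obstacle is that both ``slack'' conditions may fail simultaneously for every vertex; here the argument closes elegantly by summing: if $\deg_G(v)\geq \chi(G)-1$ and $\deg_{\overline{G}}(v)\geq \chi(\overline{G})-1$ for every $v$, then adding and using $\deg_G(v)+\deg_{\overline{G}}(v)=n-1$ gives exactly $\chi(G)+\chi(\overline{G})\leq n+1$, so the bound holds unconditionally.

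For the lower bound, I would first establish $\chi(G)\chi(\overline{G})\geq n$. Let $k=\chi(G)$ and consider an optimal proper coloring of $G$ with color classes $V_1,\dots,V_k$. Each $V_i$ is independent in $G$, hence a clique in $\overline{G}$, so $|V_i|\leq \omega(\overline{G})\leq \chi(\overline{G})$. Summing over $i$ gives $n=\sum_{i=1}^{k}|V_i|\leq k\cdot\chi(\overline{G})=\chi(G)\chi(\overline{G})$. The AM--GM inequality then delivers
\[
\chi(G)+\chi(\overline{G})\geq 2\sqrt{\chi(G)\chi(\overline{G})}\geq 2\sqrt{n},
\]
completing the proof. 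The main subtlety in the whole argument is the careful case analysis in the inductive step for the upper bound; the lower bound is essentially a packaging of the clique-cover interpretation of the chromatic partition together with AM--GM, and presents no real obstacle.
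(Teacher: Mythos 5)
Your proof is correct: the inductive argument for the upper bound (with the degree-sum fallback when neither $G$ nor $\overline{G}$ has slack at any vertex) and the derivation of $\chi(G)\chi(\overline{G})\geq n$ followed by AM--GM are both sound, and together they are essentially the original Nordhaus--Gaddum argument. Note, however, that the paper does not prove this statement at all --- it is quoted as a known classical result with a citation to \cite{Nordhaus} --- so there is no in-paper proof to compare against; your write-up simply supplies the standard proof of the cited theorem.
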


Here, we will find Nordhaus-Gaddum-like relations for the total dominator chromatic number.
\begin{thm}
\label{chi^{t}_{d}(overline{C(G)})}
For any connected graph $G$ of order $n\geq4$ and size $ m $,
\begin{equation*}
\chi^{t}_{d}(\overline{C(G)})=\left\{
\begin{array}{ll}
n                              & \mbox{if } G \mbox{ is a tree}, \\
m  ~~  & \mbox{otherwise}.
\end{array}
\right.
\end{equation*}
\end{thm}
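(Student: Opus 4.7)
The plan is to record the structure of $\overline{C(G)}$, derive the lower bounds, and produce matching upper-bound constructions in the tree and non-tree cases. Writing $\mathcal{C}=\{c_{jk}\mid v_jv_k\in E(G)\}$, in $\overline{C(G)}$ the set $V$ induces $G$ itself, $\mathcal{C}$ induces a clique $K_m$, and $v_i\sim c_{jk}$ iff $i\notin\{j,k\}$. The clique $K_m$ immediately gives $\chi_d^t(\overline{C(G)})\geq m$. When $G$ is a tree I would strengthen this to $n$: in any proper coloring with only $m=n-1$ colors the $K_m$ exhausts all colors on $\mathcal{C}$, so every $v_i$ reuses the color of some $c_{jk}$ with $v_i\not\sim c_{jk}$, i.e.\ with $i\in\{j,k\}$. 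This gives a map $\pi\colon V\to E(G)$ with $\pi(v)\ni v$; it must be injective, because $\pi(v_i)=\pi(v_l)=v_jv_k$ would force $\{v_i,v_l\}=\{v_j,v_k\}\in E(G)$, producing two $\overline{C(G)}$-adjacent vertices sharing a color. For a tree $m<n$, so no such injection exists and $\chi_d^t(\overline{C(G)})\geq n$.

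For the upper bound when $G$ is not a tree I would invoke Hall's theorem to produce an injection $\pi\colon V\to E(G)$ with $\pi(v)\ni v$: the Hall condition ``edges incident to $S$ are at least $|S|$'' holds for $S=V$ because $m\geq n$, and for $S\subsetneq V$ by the standard count $|E(G[S])|+|E(S,V\setminus S)|\geq (|S|-k)+k=|S|$, where $k$ is the number of components of $G[S]$ and the second term uses that $G$ is connected. Color each $c_{jk}$ with its own color and each $v_i$ with the color of $c_{\pi(v_i)}$; this is a proper $m$-coloring. The total-dominator condition reduces to two observations: every $c_{jk}$ completely dominates some pair class $\{v_i,c_{\pi(v_i)}\}$ with $i\notin\{j,k\}$ (such $i$ exists because $n\geq4$), and every $v_i$ completely dominates $\{w,c_{\pi(w)}\}$ where $w$ is the other endpoint of $\pi(v_i)$, since injectivity of $\pi$ forces $\pi(w)\neq v_iw$ and hence $v_i\sim c_{\pi(w)}$.

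For the upper bound when $G$ is a tree, since $n\geq 4$ I would pick a leaf $v_1$, its neighbor $v_2$ (of degree $\geq2$), and a second neighbor $v_3$ of $v_2$; then $A=\{v_1,v_3\}$ is independent in $T$. Root $T$ at $v_1$, set $\pi(v_l)$ to be the parent edge of $v_l$ for $v_l\notin A$, and color $A$ with a new color, each $c_{jk}$ with its own color, and each $v_l\notin A$ with the color of $c_{\pi(v_l)}$. The unique edge missed by $\pi$ is $v_2v_3$, giving a singleton class $\{c_{23}\}$ and a total of $n$ classes. The TDC check reduces to: $v_1$ completely dominates $\{c_{23}\}$; $v_3$ completely dominates $\{v_2,c_{12}\}$; $v_2$ completely dominates $A$; each $v_l\notin A$ whose parent is not $v_3$ uses the out-neighbor/injectivity trick above; each child of $v_3$, whose out-neighbor lies in $A$, completely dominates the singleton $\{c_{23}\}$ instead; every $c_{jk}\neq c_{23}$ dominates $\{c_{23}\}$; and $c_{23}$ itself dominates some pair class $\{v_l,c_{\pi(v_l)}\}$ with $l\notin\{1,2,3\}$, which exists since $n\geq 4$.

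The main obstacle is the tree upper bound, because three obstructions must be resolved at once: (i) the leaf $v_1$ needs a singleton class $\{c_{jk}\}$ with $1\notin\{j,k\}$, so $\pi$ must leave at least one edge unused; (ii) the common $T$-neighbor $v_2$ of the two $A$-vertices must itself be $T$-adjacent to both, so that it can dominate $A$; and (iii) the children of $v_3$, whose parent edge carries them into $A$ and hence escapes the out-neighbor argument, still need a class to dominate. The particular choice $A=\{v_1,v_3\}$ with unused edge $v_2v_3$ resolves all three at once.
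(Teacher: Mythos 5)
Your proposal is correct, and its skeleton matches the paper's: the clique on $\mathcal{C}$ gives $\chi_d^t(\overline{C(G)})\geq m$; a pigeonhole argument pushes this to $n$ for trees; and the upper bounds come from pairing each $v_i$ with a subdivision vertex $c_e$ for an edge $e$ incident to $v_i$. Where you genuinely diverge is in how the pairing is produced and justified. The paper uses an ad hoc greedy rule ($V_i=\{v_{i+1}\}\cup\{c_{pq}\mid p\mbox{ or }q\mbox{ is }i+1,\ p+q\mbox{ minimal},\dots\}$) and does not verify either that every vertex actually receives an incident $c_{pq}$ or that the result is a TDC; you instead invoke Hall's theorem (via the count $|E(G[S])|+|E(S,V\setminus S)|\geq(|S|-k)+k$) to guarantee an injective assignment $\pi(v)\ni v$ when $m\geq n$, and in the tree case you use parent edges of a rooting at a leaf, which cleanly identifies the one unmatched edge $v_2v_3$ and the one doubled class $\{v_1,v_3\}$. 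Your injectivity-of-$\pi$ phrasing of the tree lower bound is also a tidier version of the paper's pigeonhole. What your route buys is a complete verification of the total-dominator condition (each $v_i$ dominates $\{w,c_{\pi(w)}\}$ for the other endpoint $w$ of $\pi(v_i)$, with injectivity guaranteeing $v_i\sim c_{\pi(w)}$; the children of $v_3$ and the vertex $c_{23}$ handled separately), which is exactly the part the paper leaves implicit; the cost is the extra machinery of Hall's theorem, which is strictly more than needed but makes the existence of the pairing indisputable.
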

  
\begin{proof}
Let $ G $ be a connected graph of order $ n\geq 4 $ and size $ m\geq 3$ with the vertex  $ V=\{v_{1}, v_{2},\cdots, v_{n}\}$. Then $V(C(G))=V(\overline {C(G)})=V\cup \mathcal{C}$ where $\mathcal{C}=\{ c_{i  j}~|~v_{i}v_{j}\in E(G) \}$ and $ E(\overline {C(G)})=E(G)\cup\{c_{ij}v_{k}~|~c_{ij}\in \mathcal{C},~v_k\in V, \mbox{ and }  k\neq i ,j\}$. Since the subgraph of $\overline{C(G)}$ induced by $\mathcal{C}$ is a complete graph of order $m$, we have $\chi (\overline{C(G)})\geq m$. Now we continue our proof in the following two cases. 

\vspace{0.2cm}

\textbf{Case 1.} $m=n-1$. Let $ f=(V_{1},\cdots,V_{m})$ be a proper coloring of $ \overline{C(G)}$. Then by the piegonhole principle $v_{i}, v_{j}\in V_{k}$ for some $i\neq j$ and some $1\leq k \leq m$, and so $v_{i}v_{j}\not\in E(G)$. Since the subgraph of $\overline{C(G)}$ induced by $\mathcal{C}$ is isomorphic to the complete graph $K_{m}$, we have $f(c_{pq})=k$ for some $c_{pq}\in \mathcal{C}$ in which $(p,q)\neq(i,j)$. Then $\{c_{pq}v_{i},c_{pq}v_{j}\} \cap E(\overline{C(G)})\neq \emptyset$ (because $c_{pq}v_{i}\in E(C(G))$ implies $p=i$ and $q\neq j$, and so $c_{pq}v_{j}\in E(\overline{C(G)})$ which contradicts the fact $f(v_{j})=f(c_{pq})=k$). Therefore $\chi (\overline{C(G)})\geq m+1$ and so $\chi_d^t (\overline{C(G)})\geq m+1$. Now by assumptions $v_{1}v_{n}\in E(\overline{C(G)})$ and  $v_{1}v_{2}\not \in E(\overline{C(G)})$, since the coloring function $g=(V_{1},\cdots,V_{m}, V_{m+1})$ is a TDC of $\overline{C(G)}$ in which $V_{1}=\{v_{1},v_{2}\}$, $V_{m+1}=\{c_{1n}\}$, and $V_{i}=\{v_{i+1}\}\cup \{c_{pq}~|~p \mbox{ or } q \mbox{ is }i+1 \mbox{ and } p+q \mbox{ is minimum and }c_{pq}\not\in V_1\cup ~\cdots ~\cup V_{i-1}\}$ for $2\leq i \leq m$, we have $\chi^{t}_{d} (\overline{C(G)})= m+1$. 
\vspace{0.2cm}

For an example see Figure \ref{fi:proofexample9} \textcolor{red}{(a)} in which $(\{v_1,v_2\},\{v_3,c_{35}\},\{v_4c_{24}\},\{v_5,c_{25}\},\{c_{15}\})$ is a min-TDC of the graph.

\vspace{0.2cm}

\textbf{Case 2.} $m\geq n$. For $m= n$, consider the coloring function $ f=(V_{1},\cdots,V_{m})$ in which $V_i=\{v_i\}\cup \{c_{pq}~|~p \mbox{ or } q \mbox{ is } i \mbox{ and } p+q \mbox{ is minimum and }c_{pq} \not\in V_1\cup \cdots \cup V_{i-1}\}$ for $1\leq i \leq m$ and for $m> n$ consider the coloring function $ f=(V_{1},\cdots,V_{m})$ in which
\[
V_{i}=\{v_{i}\}\cup \{c_{pq}~|~p \mbox{ or } q \mbox{ is } i \mbox{ such that } p+q \mbox{ is minimum and }c_{pq}\not\in V_1\cup ~\cdots ~\cup V_{i-1}\}~(1\leq i \leq n),
\]
\[
V_{n+i}=\{\alpha_i\} \mbox{ when } 1\leq i\leq m-n \mbox{ and } \mathcal{C}-(V_{1}\cup \cdots \cup V_{n})=\{\alpha_i~|~ 1\leq i\leq m-n \}.
\]
Since in each of the cases the coloring functions $f$ are total dominating colorings of $\overline{C(G)}$, we have $\chi^{t}_{d} (\overline{C(G)})= m$. 

\vspace{0.2cm}

In Figure \ref{fi:proofexample9}, $(\{v_1,c_{12}\},\{v_2,c_{23}\},\{v_3,c_{34}\},\{v_4,c_{14}\})$ is a min-TDC of the middle graph, and $(\{v_1,c_{12}\},\{v_2,c_{23}\},\{v_3,c_{34}\},\{v_4,c_{24}\},\{v_5,c_{35}\})$ is a min-TDC of the right graph.
\end{proof}
\begin{figure}[ht]
\centerline{\includegraphics[width=12cm, height=5cm]{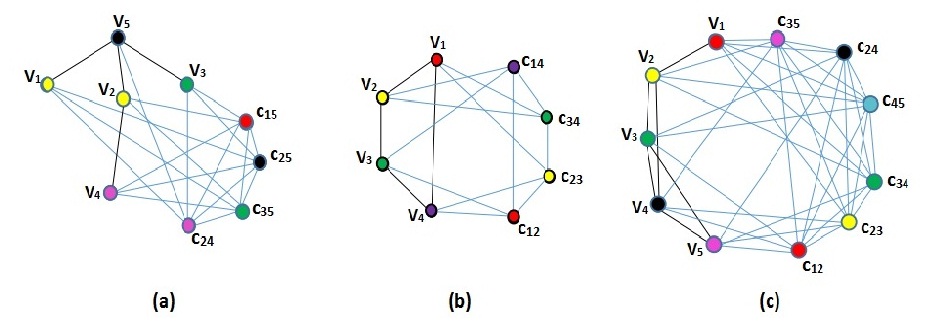}}
\vspace*{-0.3cm}
\caption{A min-TDC of $\overline{C(G)}$ when $m=n-1$ (left), $m=n$ (middle) and $m>n$ (right)}\label{fi:proofexample9}
\end{figure}


As a result of Theorems \ref{2n/3+1 =< chi_dt C(G) =< n+n/2}, \ref{2n/3 +1 =< chi_d ^t(C(G)) =<n+1}, \ref{chi^{t}_{d}(overline{C(G)})} and Proposition \ref{chi_d^t C(P_n)}  we have the next propositions as two Nordhaus-Gaddum relations.

\begin{prop}
For any tree $\mathbb{T}$ of order $n\geq 4$,
\begin{equation*}
\chi_d ^t(C(\mathbb{T})+ \chi_d ^t(\overline{C(\mathbb{T})})=\left\{
\begin{array}{ll}
\lfloor 2n/3\rfloor+n+2   & \mbox{if }n\equiv 1 \pmod{3}\mbox{ or }n=5, \\
\lfloor 2n/3\rfloor+n+1   & \mbox{otherwise},
\end{array}
\right.
\end{equation*}
if $\mathbb{T}$ is a path, and 
\[
n+ 1+\lfloor 2n/3 \rfloor \leq \chi_d ^t(C(\mathbb{T}))+ \chi_d ^t(\overline{C(\mathbb{T})})\leq 2n+1
\]
if $\Delta(\mathbb{T}) \leq n-2$,
\end{prop}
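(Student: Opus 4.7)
The plan is to treat this proposition as a direct corollary of the cited results, doing essentially nothing more than substitution and addition. The key observation is that for any tree $\mathbb{T}$ of order $n\geq 4$, the size is $m=n-1$, so Theorem \ref{chi^{t}_{d}(overline{C(G)})} applies in its first branch and gives the clean equality $\chi_d^t(\overline{C(\mathbb{T})})=n$. All of the work then comes from controlling the other summand $\chi_d^t(C(\mathbb{T}))$.

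For the first assertion (paths), I would first note that $\mathbb{T}=P_n$ is indeed a tree, so $\chi_d^t(\overline{C(P_n)})=n$ by Theorem \ref{chi^{t}_{d}(overline{C(G)})}. I would then invoke Proposition \ref{chi_d^t C(P_n)}, which supplies the exact value of $\chi_d^t(C(P_n))$: namely $\lfloor 2n/3\rfloor+2$ precisely when $n\equiv 1\pmod 3$ or $n=5$, and $\lfloor 2n/3\rfloor+1$ otherwise. Summing the two quantities yields exactly the two-case formula stated in the proposition; no further computation is required.

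For the second assertion (trees with $\Delta(\mathbb{T})\leq n-2$), the argument is equally short. Since any tree on $n\geq 4$ vertices is connected, Theorem \ref{2n/3 +1 =< chi_d ^t(C(G)) =<n+1} gives
\[
\lfloor 2n/3\rfloor + 1 \;\leq\; \chi_d^t(C(\mathbb{T})) \;\leq\; n+1,
\]
while Theorem \ref{chi^{t}_{d}(overline{C(G)})} once more gives $\chi_d^t(\overline{C(\mathbb{T})})=n$. Adding these estimates termwise produces the desired Nordhaus-Gaddum inequality
\[
n + 1 + \lfloor 2n/3\rfloor \;\leq\; \chi_d^t(C(\mathbb{T})) + \chi_d^t(\overline{C(\mathbb{T})}) \;\leq\; 2n+1.
\]

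There is essentially no obstacle in this proof; the only small item worth flagging is verifying that the hypotheses of the cited theorems are in force. In part one, $P_n$ is a connected graph of order $n$ with $\Delta(P_n)=2\leq n-2$ for $n\geq 4$, so Proposition \ref{chi_d^t C(P_n)} applies. In part two, the standing hypothesis $\Delta(\mathbb{T})\leq n-2$ is exactly what Theorem \ref{2n/3 +1 =< chi_d ^t(C(G)) =<n+1} requires. With these checks in hand, the proposition is obtained without further argument.
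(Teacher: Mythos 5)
Your proposal is correct and follows exactly the route the paper intends: the proposition is stated there as an immediate consequence of Theorem \ref{chi^{t}_{d}(overline{C(G)})} (which gives $\chi_d^t(\overline{C(\mathbb{T})})=n$ since a tree has $m=n-1$), Proposition \ref{chi_d^t C(P_n)} for the path case, and Theorem \ref{2n/3 +1 =< chi_d ^t(C(G)) =<n+1} for the bounded-degree case, with no further argument given. Your hypothesis checks are all in order, so nothing is missing.
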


\begin{prop}
For any connected graph $G$ of order $n\geq4$ and size $ m\geq n$,
\[
m+1+\lfloor 2n/3 \rfloor \leq \chi_d ^t(C(G))+ \chi_d ^t(\overline{C(G)})\leq m+n+\lceil n/2 \rceil
\]
if $G$ has a Hamiltonian path, and 
\[
m+1+\lfloor 2n/3 \rfloor \leq \chi_d ^t(C(G))+ \chi_d ^t(\overline{C(G)})\leq m+n+1
\]
if $\Delta(G) \leq n-2$.
\end{prop}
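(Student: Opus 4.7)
The plan is to observe that this proposition is a direct consequence of combining three previously established facts: Theorem \ref{chi^{t}_{d}(overline{C(G)})}, which computes $\chi_d^t(\overline{C(G)})$ exactly, together with the bounds on $\chi_d^t(C(G))$ supplied by Theorem \ref{2n/3+1 =< chi_dt C(G) =< n+n/2} in the Hamiltonian-path case and Theorem \ref{2n/3 +1 =< chi_d ^t(C(G)) =<n+1} in the case $\Delta(G)\leq n-2$. So the proof will essentially consist of adding the two parts of each inequality.

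First I would invoke Theorem \ref{chi^{t}_{d}(overline{C(G)})}. Under the hypothesis $m\geq n$, the graph $G$ is not a tree (a tree has $m=n-1$), so the ``otherwise'' branch applies and gives
\[
\chi_d^t(\overline{C(G)})=m.
\]
This identity is the workhorse of the argument: it reduces the Nordhaus--Gaddum sum to $m+\chi_d^t(C(G))$, and the remaining task is just to quote the appropriate bounds on $\chi_d^t(C(G))$.

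Next I would split into the two cases specified in the statement. If $G$ has a Hamiltonian path, then by Theorem \ref{2n/3+1 =< chi_dt C(G) =< n+n/2},
\[
\lfloor 2n/3\rfloor+1\leq \chi_d^t(C(G))\leq n+\lceil n/2\rceil,
\]
and adding $m$ to each side gives the first displayed inequality. If instead $\Delta(G)\leq n-2$, then by Theorem \ref{2n/3 +1 =< chi_d ^t(C(G)) =<n+1},
\[
\lfloor 2n/3\rfloor+1\leq \chi_d^t(C(G))\leq n+1,
\]
and again adding $m$ to each side yields the second displayed inequality.

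I do not anticipate a genuine obstacle here: the only point that requires a second of attention is the verification that the assumption $m\geq n$ puts us into Case~2 of Theorem \ref{chi^{t}_{d}(overline{C(G)})} (not the tree case), which I noted above. Once the exact value $\chi_d^t(\overline{C(G)})=m$ is in hand, the proof is purely a matter of quoting and summing the bounds on $\chi_d^t(C(G))$ from the earlier sections.
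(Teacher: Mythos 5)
Your proposal is correct and matches the paper's own derivation: the paper states this proposition without a separate proof, presenting it precisely as a consequence of Theorem \ref{chi^{t}_{d}(overline{C(G)})} (giving $\chi_d^t(\overline{C(G)})=m$ since $m\geq n$ rules out the tree case) combined with the bounds of Theorems \ref{2n/3+1 =< chi_dt C(G) =< n+n/2} and \ref{2n/3 +1 =< chi_d ^t(C(G)) =<n+1}. Your explicit check that $m\geq n$ forces the non-tree branch is exactly the one point worth noting, and the rest is the same summation of bounds.
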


\section{\bf Cycles, paths, wheels and complete multipartite graphs}\label{t80}
In this section, we calculate the total dominator chromatic number of the central of cycles, paths, wheels and multipartite graphs. The total dominator chromatic number of the central of cycles and paths are given in the first two proposions. 
\begin{prop}
\label{chi_d^t C(P_n)}
For any path $P_n$ of order $n\geq 2$,
\begin{equation*}
\chi_d^t(C(P_n))=\left\{
\begin{array}{ll}
\lfloor 2n/3\rfloor+2   & \mbox{if }n\equiv 1 \pmod{3}\mbox{ or }n=3, 5, \\
\lfloor 2n/3\rfloor+1   & \mbox{otherwise}.
\end{array}
\right.
\end{equation*}
\end{prop}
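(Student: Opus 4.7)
The lower bound $\chi_d^t(C(P_n))\geq \lfloor 2n/3\rfloor+1$ follows from Theorem~\ref{n/2+2 =< chi_d t(C(G))=<n/2+n}, so the work splits into (i) matching upper-bound constructions in every residue of $n$ modulo $3$ and (ii) sharpening that lower bound by one in the exceptional cases $n\equiv 1\pmod 3$ and $n\in\{3,5\}$.

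For the upper bounds I would partition $V(P_n)$ into blocks $\{v_{3i-2},v_{3i-1}\}\cup\{v_{3i}\}$, assigning the doubleton one colour and the singleton a second one, then dump all of $\mathcal C$ into one additional colour class. Properness is automatic because the only monochromatic $V$-pairs are consecutive in $P_n$ and hence non-adjacent in $C(P_n)$, while $\mathcal C$ is an independent set of $C(P_n)$. Each $c_{i,i+1}$ totally dominates the unique pure-$V$ class lying inside $\{v_i,v_{i+1}\}$, and, for $n\geq 6$, each $v_i$ has some block at distance at least $2$ in $P_n$ that it totally dominates. This construction uses $\lfloor 2n/3\rfloor+1$ colours when $n\equiv 0\pmod 3$ and also when $n\equiv 2\pmod 3$ (after ending with a final doubleton $\{v_{n-1},v_n\}$). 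For $n\equiv 1\pmod 3$ the leftover vertex $v_n$ is forced into its own singleton class, yielding $\lfloor 2n/3\rfloor+2$ colours; the anomalies $n=3$ and $n=5$ are handled directly by the TDCs $(\{v_1\},\{v_2\},\{v_3\},\mathcal C)$ and $(\{v_1\},\{v_2,v_3\},\{v_4\},\{v_5\},\mathcal C)$, respectively.

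For the matching lower bound in the exceptional cases, I would argue by contradiction from a hypothetical TDC $f=(V_1,\dots,V_\ell)$ of $C(P_n)$ with $\ell=\lfloor 2n/3\rfloor+1$. Since $N_{C(P_n)}(c_{i,i+1})=\{v_i,v_{i+1}\}$, the class that $c_{i,i+1}$ totally dominates must be a pure-$V$ class of the form $\{v_i\}$, $\{v_{i+1}\}$, or $\{v_i,v_{i+1}\}$; call such classes \emph{dominators}, and note that each of the $n-1$ edges of $P_n$ demands one. Classify the remaining classes as pure-$\mathcal C$ or mixed; since $V$ induces $\overline{P_n}$ in $C(P_n)$, every class contains at most two $V$-vertices and those must be consecutive in $P_n$. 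Writing $d$ for the number of dominators and $V_S\subseteq V$ for their union, the residual $V\setminus V_S$ must populate the non-dominator classes at a rate of at most two per class, and any two vertices of $V\setminus V_S$ whose $P_n$-indices differ by at least two occupy distinct classes. A case analysis on $d$, combined with the observation that whenever $v_i\notin V_S$ the $c$-vertex $c_{i-1,i}$ or $c_{i,i+1}$ must be placed in a class containing neither $v_i$ nor $v_{i\pm 1}$, forces $\ell\geq\lfloor 2n/3\rfloor+2$ exactly when $n\equiv 1\pmod 3$ or $n\in\{3,5\}$.

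The main obstacle is this last case analysis: one must simultaneously juggle the number of dominators, the edges of $P_n$ they cover, the placement of $V\setminus V_S$, and the forced pure-$\mathcal C$ or additional mixed classes needed to absorb the $c$-vertices excluded from all natural placements. The residue $n\bmod 3$ enters through the incompatibility between the spacing of an optimal vertex cover of $P_n$ (every second vertex) and the block size of three underlying the $\lfloor 2n/3\rfloor$ bound; the base cases $n\in\{3,4,5,7\}$ can be checked by enumerating dominator patterns, and the general $n\equiv 1\pmod 3$ statement then follows by a three-vertex reduction from $n$ to $n-3$ that preserves the parity obstruction.
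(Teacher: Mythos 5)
Your lower bound $\lfloor 2n/3\rfloor+1$ via Theorem~\ref{n/2+2 =< chi_d t(C(G))=<n/2+n} and your upper-bound constructions are correct and essentially identical to the paper's: its explicit colorings are precisely your blocks $\{v_{3i-2},v_{3i-1}\}\cup\{v_{3i}\}$ with one extra class for $\mathcal{C}$ and the tail adjusted by the residue of $n$, and your ad hoc colorings for $n=3,5$ check out. That half of the proposal is complete.

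The genuine gap is the sharpened lower bound $\chi_d^t(C(P_n))\geq\lfloor 2n/3\rfloor+2$ for $n\equiv 1\pmod 3$ and for $n=5$. You identify the right objects --- the ``dominator'' classes, which must be $\{v_i\}$, $\{v_{i+1}\}$ or $\{v_i,v_{i+1}\}$ because $N(c_{i(i+1)})=\{v_i,v_{i+1}\}$ --- but then you only assert that ``a case analysis \dots forces $\ell\geq\lfloor 2n/3\rfloor+2$ exactly when $n\equiv 1\pmod 3$ or $n\in\{3,5\}$'', which is the statement to be proved, not an argument for it. The inductive step you lean on, a ``three-vertex reduction from $n$ to $n-3$'', has no visible mechanism: although $C(P_{n-3})$ sits inside $C(P_n)$ as an induced subgraph, the restriction of a TDC of $C(P_n)$ to it is in general not a TDC (a vertex may totally dominate only classes meeting the deleted part), so optimality does not transfer downward without an argument you have not supplied. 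The paper needs no induction: letting $J$ index the dominator classes and $J_1,J_2$ those of size one and two, it notes that a doubleton class $\{v_i,v_{i+1}\}$ forces $\{v_{i+2}\}$ to be a singleton class (since $c_{(i+1)(i+2)}$ must dominate a class inside $\{v_{i+1},v_{i+2}\}$ and both $\{v_{i+1}\}$ and $\{v_{i+1},v_{i+2}\}$ are unavailable), whence $|J_1|\geq|J_2|$; combined with $2|J_2|+|J_1|=n$ this yields $|J_2|\leq\lfloor n/3\rfloor$ and $|J|=n-|J_2|\geq n-\lfloor n/3\rfloor=\lfloor 2n/3\rfloor+1$ for $n\equiv 1\pmod 3$, and one further class disjoint from $V(P_n)$ is still required for the vertices of $\mathcal{C}$. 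Note that the residue enters exactly through the boundary doubleton $\{v_{n-1},v_n\}$, which has no $v_{n+1}$ to force a singleton and is what lets $n\equiv 2\pmod 3$ escape the bound; some closed count of this kind (plus a short direct check for $n=5$, where the paper argues separately) is what your sketch is missing.
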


\begin{proof}
Since $ C(P_{2})\cong P_{3} $ and $ C(P_{3})\cong C_{5} $, and obviousely $ \chi^{t}_{d}(C({P_2}))= 2,~ \chi^{t}_{d}(C({P_3}))= 4 $, we assume $n\geq 4$. Let $ P_{n}:v_{1}v_{2}\cdots v_{n}$ be a path of order $ n\geq2 $ in which $v_{i}v_{j}\in  E(P_{n}) $ if and only if $2\leq j =i+1 \leq n$. Then $V(C(P_{n}))=V\cup \mathcal{C}$ where $V=V(P_n)$ and $\mathcal{C}=\{ c_{i(i+1)}~|~1\leq i \leq n-1 \}$.  Then by Theorem \ref{n/2+2 =< chi_d t(C(G))=<n/2+n} and this fact that the coloring function $ f $ with the criterion
\begin{equation*}
  f(v_{i})=\left\{
\begin{array}{ll}
2\lfloor i/3\rfloor                                & i\equiv 0 \pmod 3, \\
n-\lfloor n/3\rfloor    & i=n,\\
2\lfloor i/3\rfloor +1 & otherwise,
\end{array}
\right.
\end{equation*}
\[
f(c_{i(i+1)})=n-\lfloor\dfrac{n}{3}\rfloor +1 \mbox{ (for } 1\leq i \leq n)
\]
is a TDC of $C(P_n)$, we obtain
\begin{eqnarray}
\label{2n/3 +1 =< chi t_d(C(P_n)) =< n-n/3+1}
\lfloor\dfrac{2n}{3}\rfloor +1 \leq \chi^{t}_{d}(C(P_{n})) \leq n-\lfloor\dfrac{n}{3}\rfloor+1.
\end{eqnarray}

\vspace{0.2cm}

\textbf{Case 1.} $n\equiv 2\pmod{3}$. First we prove $ \chi^{t}_{d}(C(P_{5}))=5$. Let $ f=(V_{1}, V_{2},\cdots, V_{\ell}) $ be a min-TDC of $C(P_5)$ for some $\ell$ (notice: $4\leq \ell \leq 5$ by Proposition \ref{2=<chi_d ^t=<n}). Without loss of generality, we may assume $V_1=\{v_1,v_2\}$, $V_2=\{v_3\}$, and $V_3=\{v_4,v_5\}$. Let $v_3 \succ_{t} V_{j}$ for some $j$. Then $j\geq 4$ (say $j=4$), and so $V_{4}\subseteq \{c_{23},c_{34}\}$. Since $\{c_{12},c_{45}\}\cap (V_1\cup \cdots \cup V_4)=\emptyset$, we have $\ell=5$. Now for $ n\neq 5 $, since the coloring function $ f $ of $C(P_n)$ with the criterion
\begin{equation*}
  f(v_{i}) =\left\{
\begin{array}{lll}
2\lfloor i/3 \rfloor                                & i\equiv 0 \pmod 3,& \\
2\lfloor i/3 \rfloor +1 & i\not\equiv 0 \pmod 3, & \mbox{and}
\end{array}
\right.
\end{equation*}
\[
 f(c_{i(i+1)})= \lfloor 2n/3\rfloor+1 \mbox{ (for } 1\leq i \leq n-1)
\] 
is a TDC of $ C(P_{n}) $ with $\lfloor 2n/3\rfloor+1 $ color classes, we obtain $ \chi_d^t(C(P_n))= \lfloor 2n/3\rfloor+1 $ by (\ref{2n/3 +1 =< chi t_d(C(P_n)) =< n-n/3+1}).
  
\vspace{0.2cm}

\textbf{Case 2.} $n \not\equiv 2 \pmod{3}$. Since $\lfloor 2n/3 \rfloor +1= n-\lfloor n/3 \rfloor+1$ in (\ref{2n/3 +1 =< chi t_d(C(P_n)) =< n-n/3+1}) when $n\equiv 0 \pmod{3}$, it is sufficient to prove $ \chi_d^t(C(P_n)) > n-\lfloor \frac{n}{3}\rfloor $ when $n\equiv 1 \pmod{3}$. Let $ f=(V_{1}, V_{2},\cdots, V_{\ell} ) $ be a min-TDC of $ C(P_{n})$, and let 
\[
J=\{j\mid 1\leq j \leq \ell,~c_{i(i+1)}\succ_{t} V_{j} \mbox{ for some } 1\leq i \leq n-1\}.
\]
Then $ J_{1}\cup J_{2} $ is a partition of $ J $ where $ J_{i}=\{ j \in J \mid |V_{j}|=i \} $ for $i=1,2$. If $ J_{2}=\emptyset $, then $ \ell \geq |J| \geq n $, and there is nothing to prove. Hence $ J_{2}\neq \emptyset $, and $ n= 2|J_{2}|+|J_{1}| $ implies $ \ell \geq |J| = |J_{2}|+|J_{1}|= n- |J_{2}|$. Let $ |J| \leq n-\lfloor \frac{n}{3}\rfloor-1 $. Then $ |J_{2}|\geq \lfloor \frac{n}{3}\rfloor+1 $. Since $ V_{t}=\{ v_{i}, v_{i+1} \} $ (for some $ t $) implies $ V_{k}=\{ v_{i+2} \} $ (for some $ k $), we conclude $ |J_{1}|\geq |J_{2}|  $, and so 
\[
n= 2|J_{2}|+|J_{1}|\geq 3|J_{2}| \geq 3\lfloor \frac{n}{3}\rfloor+3>n,
\]
a contradiction. Therefore $  |J| \geq n-\lfloor \frac{n}{3}\rfloor $. On the other hand, since there exists at least a color class $V_{t} $ such that $V_{t} \cap V(P_{n})= \emptyset $, we obtain $ \ell > n-\lfloor \frac{n}{3}\rfloor$, as desired. 

\vspace{0.2cm}

In Figure \ref{fi:proofexample7}, $(\{v_1,v_2\},\{v_3\},\{v_4,v_5\},\{v_6\},\{v_7,v_8\},\mathcal{C})$ is a min-TDC of $C(P_8)$.
\end{proof}

\begin{figure}[ht]
\centerline{\includegraphics[width=4.5cm, height=4cm]{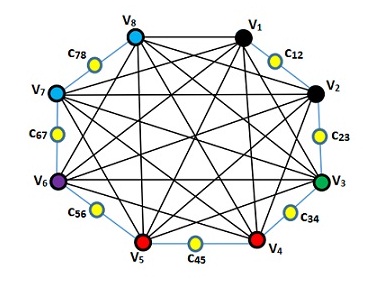}}
\vspace*{-0.5cm}
\caption{A min-TDC of $C({P_8})$}\label{fi:proofexample7}
\end{figure}

\begin{prop}
\label{chi_d^t(C({C_n}))}
For any cycle $C_n$ of order $n\geq 3$,
\begin{equation*}
\chi_d^t(C({C_n}))=\left\{
\begin{array}{ll}
\lfloor 2n/3\rfloor +1   & \mbox{if }n\equiv 0 \pmod 3  \mbox{ and }n\neq 3,\\
\lfloor 2n/3\rfloor +2  & \mbox{otherwise}.
\end{array}
\right.
\end{equation*}
\end{prop}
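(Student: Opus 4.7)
My plan is to mirror the path argument of Proposition~\ref{chi_d^t C(P_n)}: split on $n\bmod 3$, give an explicit TDC for the upper bound, sharpen the general lower bound of Theorem~\ref{n/2+2 =< chi_d t(C(G))=<n/2+n} by analysing the forced structure of a min-TDC, and dispose of the small case $n=3$ via the isomorphism $C(C_3)\cong C_6$ and the known value $\chi^t_d(C_6)=4$. Throughout, I label $C_n$ as $v_1v_2\cdots v_nv_1$; two $v$-vertices are adjacent in $C(C_n)$ iff they are non-adjacent in $C_n$, and $N_{C(C_n)}(c_{i(i+1)})=\{v_i,v_{i+1}\}$.

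For the upper bound with $n\ge 5$, I use the template that partitions $V(C_n)$ into the $\lfloor n/3\rfloor$ pairs $\{v_{3j-2},v_{3j-1}\}$, the $\lfloor n/3\rfloor$ singletons $\{v_{3j}\}$, the leftover singleton $\{v_n\}$ when $n\equiv 1\pmod 3$ or $\{v_{n-1}\},\{v_n\}$ when $n\equiv 2\pmod 3$, and places all of $\mathcal C$ in one extra class. Proper colouring holds because $v_{3j-2}v_{3j-1}\in E(C_n)$ and $\mathcal C$ is independent in $C(C_n)$. Each $c_{i(i+1)}$ totally dominates either the pair class $\{v_i,v_{i+1}\}$ (when it is a class) or one of the singletons $\{v_i\}$, $\{v_{i+1}\}$ adjacent to it; each $v_i$ totally dominates either a singleton $\{v_k\}$ with $k\notin\{i-1,i,i+1\}$ or, when no such singleton is available, the pair $\{v_1,v_2\}$ (reachable whenever $i\in\{4,\dots,n-1\}$), and a direct check shows this works for all $n\ge 5$. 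The colour count is $\lfloor 2n/3\rfloor+1$ for $3\mid n$ and $\lfloor 2n/3\rfloor+2$ otherwise. For $n=4$ the template fails (for example, $v_3$ has no class to dominate), so I substitute the explicit TDC $\{v_1\}$, $\{v_3\}$, $\{v_2,c_{34},c_{41}\}$, $\{v_4,c_{12},c_{23}\}$ with four colours.

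For the lower bound, Theorem~\ref{n/2+2 =< chi_d t(C(G))=<n/2+n} already yields $\lfloor 2n/3\rfloor+1$, so only the improvement by $1$ when $n\not\equiv 0\pmod 3$ requires new work. Let $f=(V_1,\dots,V_\ell)$ be a min-TDC. Since any class totally dominated by $c_{i(i+1)}$ must be a pure-$v$ subset of $\{v_i,v_{i+1}\}$, every monochromatic pair $\{v_i,v_{i+1}\}$ is a pure-$v$ class (otherwise the larger colour class containing it would not fit inside $\{v_i,v_{i+1}\}$, and $c_{i(i+1)}$ would have no class to dominate). Let $p$ be the number of pair classes, so their edges form a matching $M\subseteq E(C_n)$, and let $S$ be the set of $v$-vertices lying in pure singleton classes; for every edge $v_iv_{i+1}\notin M$ the vertex $c_{i(i+1)}$ must dominate $\{v_i\}$ or $\{v_{i+1}\}$, so $S$ is a vertex cover of $C_n\setminus M$. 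Now $C_n\setminus M$ is a disjoint union of $p$ paths of vertex-counts $k_1,\dots,k_p\ge 2$ summing to $n$, with vertex-cover number $(n-x)/2$ where $x=\#\{i:k_i\text{ odd}\}$; coupling this with $|S|\le n-2p$ and the parity constraint $x\equiv n\pmod 2$, an elementary optimisation gives $p\le\lfloor n/3\rfloor$, and in the extremal case $p=\lfloor n/3\rfloor$ forces $|S|\ge n-2p-1$, so at most one $v$-vertex $v_{j^*}$ lies in a mixed class. The two subdivisions $c_{(j^*-1)j^*}$ and $c_{j^*(j^*+1)}$ are excluded from that mixed class (they are adjacent to $v_{j^*}$) and from every pair class, so they sit in a pure $c$-class, giving $\ell\ge(n-p)+1=\lceil 2n/3\rceil+1=\lfloor 2n/3\rfloor+2$. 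When $p\le\lfloor n/3\rfloor-1$ the bound $\ell\ge n-p\ge\lceil 2n/3\rceil+1$ is already strong enough. The main technical obstacle is this extremal analysis together with the parity bookkeeping distinguishing $n\equiv 1$ and $n\equiv 2\pmod 3$.
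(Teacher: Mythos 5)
Your proof is correct, and while your upper-bound construction is essentially the paper's (pairs $\{v_{3j-2},v_{3j-1}\}$, singletons $\{v_{3j}\}$, leftover singletons, one class for $\mathcal{C}$, with $n=3,4$ handled ad hoc), your lower bound for $n\not\equiv 0\pmod 3$ takes a genuinely different and substantially more rigorous route. The paper's proof asserts in one sentence that, after placing $v_1\in V_1$, the classes $\{v_n\}$ (resp.\ $\{v_{n-1}\}$ and $\{v_n\}$) are forced singletons, which presupposes a normalized structure of the min-TDC that is never justified; you instead extract from an arbitrary min-TDC the matching $M$ of pure pair classes and the set $S$ of pure singletons, observe that $S$ is a vertex cover of $C_n-M$, and run $(n-x)/2\le|S|\le n-2p$ together with $x\le p$ and $x\equiv n\pmod 2$ to get $p\le\lfloor n/3\rfloor$, finishing the extremal case by producing a class disjoint from $V$. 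This buys a complete argument where the paper has only a sketch, at the cost of the parity bookkeeping. Two steps deserve one more line each: (i) the inequality $\ell\ge n-p$ used when $p\le\lfloor n/3\rfloor-1$ follows because for $n\ge 4$ each colour class contains at most two $v$-vertices (a $2$-clique of $C_n$), and by your purity observation any class with two $v$-vertices is exactly a pair class, so precisely $n-p$ classes meet $V$; and (ii) for $p=0$ the graph $C_n-M$ is the cycle itself rather than a union of paths, which is harmless since that subcase is settled by $\ell\ge n-p=n$. Note also that your structural argument already supplies the lower bound $\ell\ge 4$ for $n=4$, whereas the paper obtains it separately from $\gamma_t(C(C_4))=4$ and Theorem \ref{2=<chi_d ^t=<n}.
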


\begin{proof}
Let $ C_{n}:v_{1}v_{2}\cdots v_{n}$ be a cycle of order $ n\geq3 $ in which $v_{i}v_{j}\in  E(C_{n}) $ if and only if $ j \equiv i+1 \pmod n$. Then $V(C(C_{n}))=V\cup \mathcal{C}$ where $\mathcal{C}=\{ c_{i(i+1)}~|~1\leq i \leq n \}$. We know $\chi_d^t(C({C_n})) \geq \lfloor 2n/3\rfloor +1$ by Theorem \ref{n/2+2 =< chi_d t(C(G))=<n/2+n}. Obviousely $C(C_{3})\cong\ C_{6}$ and so $\chi^{t}_{d}({C(C_3}))= 4$. For $n=4$, it can be easily verified $ \gamma_{t}(C(C_{4}))=4$, and so $\chi^{t}_{d}(C(C_{4}))\geq \gamma_{t}(C(C_{4}))=4$ by Proposition \ref{2=<chi_d ^t=<n}. Now since $ f=(\{v_{1}, c_{23}, c_{34} \},\{v_{2}\},\{v_{3}, c_{12}, c_{41} \},\{v_{4}\}) $ is a TDC of $ C(C_{4}) $, we obtain $ \chi^{t}_{d}(C(C_{4}))= 4$. Now let $ n\geq 5$. Since $ f $ is a TDC of $ C(C_{n}) $ with the criterion
\begin{equation*}
  f(v_{i})=\left\{
\begin{array}{ll}
n-\lfloor n/3\rfloor    & i=n,\\
2\lfloor i/3\rfloor                                & i\equiv 0 \pmod 3, \\
2\lfloor i/3\rfloor +1 & otherwise,
\end{array}
\right.
\end{equation*}
\begin{equation*}
  f(c_{i(i+1)})=\left\{
\begin{array}{ll}
\lfloor 2n/3\rfloor +1                                & n\equiv 0 \pmod 3, \\
\lfloor 2n/3\rfloor +2 & otherwise,
\end{array}
\right.
\end{equation*}
for  $1\leq i \leq n$, we have
\begin{equation*}
\chi_d^t(C({C_n}))\leq \left\{
\begin{array}{ll}
\lfloor 2n/3\rfloor +1   & \mbox{if }n\equiv 0 \pmod 3  \mbox{ and }n\neq 3,\\
\lfloor 2n/3\rfloor +2  & \mbox{otherwise},
\end{array}
\right.
\end{equation*}
and so there is nothig to prove when $ n\equiv 0 \pmod 3 $. Therefore, we assume $ n\not\equiv 0 \pmod 3 $. Let $ f=(V_{1}, V_{2},\cdots, V_{\ell} ) $ be a min-TDC of $ C(C_{n})$ and let $v_1\in V_1$. Since there exists an unique index $i$ such that $V_{i}=\{f(v_{n})\} $ where $ n\equiv 1 \pmod 3 $, and there exist two different indices $i$ and $j$ such that $V_{i}=\{f(v_{n-1})\}$ and $V_{j}=\{f(v_{n})\}$ where $ n\equiv 2 \pmod 3 $, we obtain $ \chi^{t}_{d}(C(C_{n}))\geq \lfloor 2n/3\rfloor +2$. Now our proof is completed. 

\vspace{0.2cm}

In Figure \ref{fi:proofexample6}, the coloring function $(\{v_1,v_2\},\{v_3\},\{v_4,v_5\},\{v_6\},\{v_7\},\{v_8\},\mathcal{C})$ is a min-TDC of $C(C_8)$.

\begin{figure}[ht]
\centerline{\includegraphics[width=4.5cm, height=4cm]{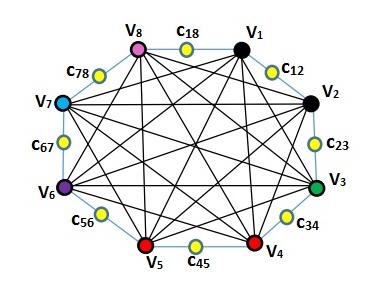}}
\vspace*{-0.3cm}
\caption{A min-TDC of $C({C_8})$}\label{fi:proofexample6}
\end{figure}

\end{proof}

The total dominator chromatic number of the central of a wheel is considered in the next proposition.
\begin{prop}
For any wheel $W_n$ of order $n+1\geq 4$,
 \begin{equation*}
\chi_d^t(C({W_n}))=\left\{
\begin{array}{ll}
\lfloor 2n/3\rfloor +3   & \mbox{if }n\equiv 0 \pmod 3  \mbox{ and }n\neq 3,\\
\lfloor 2n/3\rfloor +4  & \mbox{otherwise}.
\end{array}
\right.
\end{equation*}
\end{prop}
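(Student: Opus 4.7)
My strategy is to prove the cleaner identity $\chi_d^t(C(W_n)) = \chi_d^t(C(C_n)) + 2$; substituting the formula of Proposition \ref{chi_d^t(C({C_n}))} then gives the stated expression. I write $u$ for the hub of $W_n$, $v_1 v_2 \cdots v_n v_1$ for its rim cycle, and set $X_1 = \{v_i : 1 \leq i \leq n\} \cup \{c_{i(i+1)} : 1 \leq i \leq n\}$ and $X_2 = \{u, c_{u1}, \ldots, c_{un}\}$. I shall use repeatedly that the subgraph of $C(W_n)$ induced on $X_1$ is precisely $C(C_n)$, that $N_{C(W_n)}(u) = \{c_{u1}, \ldots, c_{un}\}$, and that $N_{C(W_n)}(c_{uj}) = \{u, v_j\}$ for every $j$.

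For the upper bound I extend a min-TDC of the induced $C(C_n)$ (supplied by Proposition \ref{chi_d^t(C({C_n}))}) by two fresh colors: one assigned to the singleton class $\{u\}$, the other shared by every vertex of $\{c_{u1}, \ldots, c_{un}\}$. Properness is immediate, and the TD condition holds because $u$ totally dominates the spoke-subdivision class (contained in $N(u)$), every $c_{uj}$ totally dominates the singleton $\{u\} \subseteq N(c_{uj})$, and every $w \in X_1$ totally dominates in $C(W_n)$ the same class it totally dominated in the $C(C_n)$ TDC (still a subset of $N_{C(W_n)}(w)$, since the only new neighbor added to $v_i$ is $c_{ui}$ and none is added to any $c_{i(i+1)}$). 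This yields $\chi_d^t(C(W_n)) \leq \chi_d^t(C(C_n)) + 2$.

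For the lower bound I consider a min-TDC $f = (V_1, \ldots, V_\ell)$ of $C(W_n)$. The class $V^*$ totally dominated by $u$ satisfies $V^* \subseteq \{c_{u1}, \ldots, c_{un}\}$, so its color $\alpha$ appears only on spoke-subdivision vertices and $\beta := f(u) \neq \alpha$. For each $c_{uj}$, the class it totally dominates lies in $\{u, v_j\}$, so equals one of $\{u\}, \{v_j\}, \{u, v_j\}$. I case-split on the size of $V_\beta$. If $V_\beta = \{u\}$, both $\alpha$ and $\beta$ are absent from $X_1$; after reassigning color $\alpha$ to every stray $c_{uj}$ without enlarging $\ell$ (see the obstacle below), the restriction $f|_{X_1}$ uses $\ell - 2$ colors and constitutes a TDC of $C(C_n)$, since no class contained in $N_{C(W_n)}(w)$ for $w \in X_1$ can contain $u$ nor equal $V^*$, so any such dominated class has nonempty intersection with $X_1$ lying inside $N_{C(C_n)}(w)$; this gives $\ell \geq \chi_d^t(C(C_n)) + 2$. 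In the case $|V_\beta| \geq 2$, a short inspection forces $V_\beta = \{u, v_{j^*}\}$ for a single index $j^*$, after which every $c_{uj}$ with $j \neq j^*$ must totally dominate the singleton $\{v_j\}$; tallying the $n-1$ forced singletons $\{v_j\}_{j \neq j^*}$, the pair $V_\beta$, the color $\alpha$, and at least one further color on the cycle-subdivision vertices $c_{i(i+1)}$ (which can share neither $\alpha$ without destroying $u$'s TD class, nor any $\{v_k\}$ without destroying $c_{uk}$'s) yields $\ell \geq n + 2$, and a direct check gives $n + 2 \geq \chi_d^t(C(C_n)) + 2$ for every $n \geq 4$.

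The main obstacle I anticipate lies in the recoloring step of the $V_\beta = \{u\}$ sub-case. It can happen that the only class some $v_i$ totally dominates in $f$ is the singleton $\{c_{ui}\} \subseteq X_2$, in which case the naive restriction to $X_1$ fails the TDC condition at $v_i$. Handling this will require either an explicit merge of $c_{ui}$ into the $\alpha$-class together with a rerouting of $v_i$ to a newly available dominated class inside $X_1$, or a direct accounting argument showing that every such isolated $c_{ui}$-singleton already contributes an extra color to $\ell$ sufficient to keep the total at least $\chi_d^t(C(C_n)) + 2$.
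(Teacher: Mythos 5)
Your overall strategy --- proving the identity $\chi_d^t(C(W_n))=\chi_d^t(C(C_n))+2$ and then substituting Proposition \ref{chi_d^t(C({C_n}))} --- is exactly the paper's, and your upper-bound construction is correct. The genuine gap is in the lower bound, and you have named it yourself without closing it. In the sub-case $V_\beta=\{u\}$ you need the restriction of $f$ to $X_1$ to be a TDC of $C(C_n)$ with $\ell-2$ colors, but a rim vertex $v_i$ may totally dominate only the singleton $\{c_{ui}\}\subseteq X_2$; merging $c_{ui}$ into the $\alpha$-class destroys the only class $v_i$ dominates, and you offer no actual repair, only two candidate strategies (``an explicit merge \dots or a direct accounting argument''). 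This is precisely the hard point of the whole argument: it is why the proof of Theorem \ref{join graph} introduces the set $T$ of exactly such problematic vertices and the Changing Color Algorithm, and spends its entire Case 1 resolving it. The paper's wheel proof avoids redoing that work by invoking Theorem \ref{join graph} with $W_n=C_n\circ K_1$ to obtain $\chi_d^t(C(W_n))\ge\chi_d^t(C(C_n))+1$ for free, and then only rules out equality at $+1$: assuming $\ell=\chi_d^t(C(C_n))+1$, the case $V_{\ell-1}=\{v_0\}$ is killed by the counting fact $|\{f(v_i)\mid 1\le i\le n\}|=n-\lfloor n/3\rfloor=\ell-2$ together with the disjointness of the rim and subdivision color sets, and the remaining cases force $\ell\ge n+2$, impossible at that value of $\ell$. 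You should either route your lower bound through Theorem \ref{join graph} in the same way, or actually carry out one of the two repairs you sketch; as written the lower bound is not established.

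A secondary imprecision: in the case $|V_\beta|\ge 2$ it is not true that ``a short inspection forces $V_\beta=\{u,v_{j^*}\}$.'' Since $u$ is non-adjacent in $C(W_n)$ to every cycle-subdivision vertex $c_{i(i+1)}$ and to each rim vertex, classes such as $\{u,c_{12}\}$ or $\{u,v_1,v_2\}$ (note $v_1v_2\notin E(C(W_n))$ because the rim edge is subdivided) are proper. The correct dichotomy is: if some $c_{uj}$ totally dominates the class of $u$, that class is contained in $N(c_{uj})=\{u,v_j\}$ and hence equals $\{u,v_j\}$; otherwise every $c_{uj}$ must totally dominate the singleton $\{v_j\}$, and your count $\ell\ge n+2$ then goes through directly. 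Phrased as you have it, the case analysis is incomplete even though the conclusion survives.
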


\begin{proof}
Since $W_3$ is isomorphic to the complete graph $K_4$, and $\chi_d^t(C(K_4))=6$ by Theorem \ref{chi_d ^t(C(G))=n+ lceil n/2 rceil}, we consider $W_n$ be a wheel graph of order $n+1\geq 5$ with the vertex set $ V=\{v_{i}\mid 0 \leq i \leq n \} $, and the edge set $ E=\{v_{0}v_{i}, v_{i}v_{i+1}~|~ 1\leq i \leq n \}$. Then $ V(C(W_{n}))=V\cup \mathcal{C}$ where $\mathcal{C}=\{ c_{0i}, c_{i(i+1)}~|~1\leq i \leq n\}$. Since $ W_{n}=C_{n}\circ K_{1} $ where $ V(K_{1})=\{v_{0}\} $ and $V(C_{n})=V-\{v_0\}$,  Theorem \ref{join graph} implies 
\[
\chi_d^t(C({C_n}))+1 \leq \chi_d^t(C({W_n})) \leq \chi_d^t(C({C_n}))+2,
\]
and so it is sufficient to prove $ \chi_d^t(C({W_n}))= \chi_d^t(C({C_n}))+2 $ by Proposition \ref{chi_d^t(C({C_n}))}. Let $ f=(V_{1}, V_{2},\cdots,V_{\ell} ) $ be a min-TDC of $ C(W_{n}) $ where $ \ell= \chi_d^t(C({C_n}))+1$. Without loss of generality, we may assume $v_0 \succ_{t} V_{\ell}$ and $v_{0}\in V_{\ell-1}$, which imply $V_{\ell} \subseteq \{ c_{0i}~|~1\leq i \leq n\}$. Since $ V_{\ell-1}=\{v_{0}\} $ implies
\begin{eqnarray}
\label{{f(c_i(i+1)),f(v_i)|1=< i =< n } subset V_1 ... V_l-2}
\{f(c_{i(i+1)}),f(v_{i})~|~ 1\leq i \leq n \}\subseteq V_1\cup \cdots \cup V_{\ell -2},
\end{eqnarray}
that contradicts the facts
\[
|\{f(v_i)~|~ 1\leq i \leq n \}|=n-\lfloor n/3 \rfloor=\ell-2, \mbox{ and}
\]
\[
\{f(c_{i(i+1)})\mid 1\leq i \leq n \} \cap \{f(v_{i})\mid 1\leq i \leq n \}= \emptyset,
\]
we assume $|V_{\ell-1}|\geq 2 $. If $v_{i}\in V_{\ell-1}$ for some $1\leq i \leq n$, then $ V_{\ell-1}=\{ v_{0}, v_{i} \} $, and so $ |V_{j}|=1 $ for each $ j\neq i $. Since $N_{C(C_n)}(v_{i})=\{ c_{i(i+1)},c_{(i-1)i}\}$, this implies the function $g$ with the criterion 
 \begin{equation*}
g(v)=\left\{
\begin{array}{ll}
f(v_{i-1})         & v=v_i,\\
f(v)        & \mbox{if }v\in V(C(C_n))-\{v_i\},
\end{array}
\right.
\end{equation*}
is a TDC of $C(C_n)$ with $\ell-2=\chi_d^t(C({C_n}))-1$ color classes, a contradiction. Therefore $ (V_{\ell-1}-\{v_0\})\subseteq  \{c_{i(i+1)}~|~ 1\leq i \leq n\}$, and so $\{\{v_i\}~|~1\leq i \leq n\}\subseteq \{V_i~|~1\leq i \leq \ell\}$. On the other hand, we have
\[
|\{f(c_{i(i+1)}),f(c_{0i})~|~ 1 \leq i \leq n\}\cup\{f(v_{0})\}|\geq2.
\]
Hence $\ell\geq n+2$, which is not possible for $n\geq 4$. Therefore $ \chi_d^t(C({W_n}))= \chi_d^t(C({C_n}))+2 $. 
\vspace{0.2cm}

In Figure \ref{fi:proofexample5}, $(\{v_0,v_{1}\},\{v_2,v_{3}\},\{v_4\},\{v_5\},\{c_{15},c_{12},c_{23},c_{34},c_{45}\},\{c_{01},c_{02},c_{03},c_{04},c_{05}\})$ is a min-TDC of $C({W_5})$.

\begin{figure}[ht]
\centerline{\includegraphics[width=5cm, height=4cm]{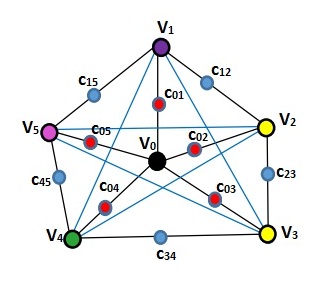}}
\vspace*{-0.3cm}
\caption{A min-TDC of $C({W_5})$}\label{fi:proofexample5}
\end{figure}

\end{proof}

Now, we consider the complete multipartite graphs. In the first step, we calculate the total dominator chromatic number of a complete bipartite graph.

\begin{prop}
\label{chi_d ^t (C(K_{m,n}))}
For any integers $n\geq m\geq 1$,
\begin{equation*}
\chi_d^t(C(K_{m,n}))=\left\{
\begin{array}{ll}
4 & \mbox{if } ~(m,n)= (1,2), \\
m+n     & \mbox{otherwise.}
\end{array}
\right.
\end{equation*}
\end{prop}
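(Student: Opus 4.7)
The plan is to split the argument according to the two cases of the statement. For the exceptional pair $(m,n)=(1,2)$, I observe that $C(K_{1,2})$ is the $5$-cycle $v_1c_{11}u_1u_2c_{12}v_1$, so $\chi_d^t(C(K_{1,2}))=\chi_d^t(C_5)$; the lower bound $\chi_d^t(C_5)\ge 4$ follows from the earlier remark that no central graph attains value $3$, and the upper bound $\chi_d^t(C_5)\le 4$ is witnessed, for example, by the coloring $(\{v_1\},\{u_1\},\{u_2,c_{11}\},\{c_{12}\})$. For all other pairs $(m,n)$ with $n\ge m\ge 1$, I aim to establish the matching bounds $\chi_d^t(C(K_{m,n}))\ge m+n$ and $\chi_d^t(C(K_{m,n}))\le m+n$.

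For the lower bound, set $A=\{v_1,\dots,v_m\}$ and $B=\{u_1,\dots,u_n\}$; in $C(K_{m,n})$ the set $A$ induces $K_m$, the set $B$ induces $K_n$, and each subdivision vertex $c_{ij}$ has only $v_i$ and $u_j$ as neighbours. Hence in any TDC $f=(V_1,\dots,V_\ell)$, every class meets $A\cup B$ in at most one vertex from each part, and the TDC requirement at $c_{ij}$ forces some class to be exactly $\{v_i\}$, $\{u_j\}$, or $\{v_i,u_j\}$. Setting $S_A=\{i:\{v_i\}\in f\}$, $S_B=\{j:\{u_j\}\in f\}$ and $P=\{(i,j):\{v_i,u_j\}\in f\}$, every pair $(i,j)\in[m]\times[n]$ must be covered by $S_A$, $S_B$, or $P$, and $P$ is necessarily a matching. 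Writing $a=m-|S_A|$, $b=n-|S_B|$, the inequality $ab\le|P|\le\min(a,b)$ forces $a=0$, $b=0$, or $a=b=1$. In the first two situations the $m$ singleton $v$-classes together with the $n$ distinct classes containing the $u_j$'s already yield $\ell\ge m+n$. In the third, with $P=\{(i_0,j_0)\}$, the $m+n-1$ classes $\{v_i\}$ ($i\ne i_0$), $\{u_j\}$ ($j\ne j_0$) and $\{v_{i_0},u_{j_0}\}$ are all rigid: absorbing any $c$-vertex into $\{v_i\}$ destroys the only dominating class of $c_{i,j_0}$ (since $\{u_{j_0}\}$ and $\{v_i,u_{j_0}\}$ are not classes), symmetrically for $\{u_j\}$, and absorbing any $c$-vertex into $\{v_{i_0},u_{j_0}\}$ destroys the dominating class of $c_{i_0,j_0}$. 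Hence at least one further class is needed for the subdivision vertices, again giving $\ell\ge m+n$.

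For the upper bound I exhibit three constructions. When $(m,n)=(1,1)$, the coloring $(\{v_1,u_1\},\{c_{11}\})$ is a TDC with two classes. When $m=1$ and $n\ge 3$, the coloring
\[
\bigl(\{v_1,u_1\},\{u_2\},\{u_3\},\dots,\{u_n\},\{c_{11},c_{12},\dots,c_{1n}\}\bigr)
\]
is a TDC with $n+1$ classes, where $n\ge 3$ is used to guarantee that every $u_j$ with $j\ge 2$ can dominate some singleton $\{u_{j'}\}$ with $j'\notin\{1,j\}$. When $m\ge 2$ and $n\ge m$, define
\[
V_i=\{v_i\}\cup\{c_{\sigma(i),\ell}:1\le\ell\le n\}\ \ (1\le i\le m),\qquad V_{m+j}=\{u_j\}\ \ (1\le j\le n),
\]
with $\sigma(i)=(i\bmod m)+1$; each $V_i$ is independent because $\sigma(i)\ne i$, $v_i$ totally dominates $V_{\sigma^{-1}(i)}$, each $u_j$ totally dominates any other $\{u_{j'}\}$, and each $c_{k,\ell}$ totally dominates $V_{m+\ell}=\{u_\ell\}$, giving a TDC with $m+n$ classes.

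The main obstacle is the rigidity step in the third case of the lower bound: identifying the $m+n-1$ structural classes is immediate from the case analysis, but verifying that no subdivision vertex can be merged into any of them without killing the TDC witness of some specific $c$-vertex---so that at least one extra color class is unavoidable---requires a careful case-by-case inspection of the possible absorptions.
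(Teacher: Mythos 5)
Your proposal is correct, and its lower-bound argument follows a genuinely different route from the paper's. The paper starts from the clique induced by the larger part $U$ to get $\ell\geq n$, assumes $u_i\in V_i$, and then splits into the cases $m=1$ and $m\geq 2$, arguing in each case about where the $v_i$'s and the subdivision vertices can sit (for $m=1$ via a permutation argument showing $v_1$ would dominate no class; for $m\geq 2$ by showing at most one $v_i$ can share a color with some $u_j$ and then locating a subdivision vertex forced into a new class). You instead exploit only the degree-$2$ subdivision vertices: since $N(c_{ij})=\{v_i,u_j\}$, each $c_{ij}$ forces some class to equal $\{v_i\}$, $\{u_j\}$ or $\{v_i,u_j\}$ exactly, and the resulting covering-by-a-matching inequality $ab\leq\min(a,b)$ pins down the class structure uniformly for all $m\geq 1$, avoiding the paper's case split; this is arguably cleaner and makes the ``one extra class for $\mathcal{C}$'' step transparent. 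Your upper-bound colorings also differ (the paper dumps $\mathcal{C}$ into one class, or two when $m=2$, whereas you distribute the $c_{k,\ell}$'s cyclically among the $v$-classes), but both are valid. One small remark: the ``rigidity/absorption'' inspection you flag as the main obstacle in the case $a=b=1$ is not actually needed --- once the covering argument shows that the classes $\{v_i\}$ ($i\neq i_0$), $\{u_j\}$ ($j\neq j_0$) and $\{v_{i_0},u_{j_0}\}$ are \emph{exactly} these sets, they contain no subdivision vertex by definition, so the extra color class is immediate.
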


\begin{proof}
Let $K_{m,n}$ be a complete bipartite graph in which $n\geq m\geq 1$. Since the central graphs $C(K_{1,1})$ and $ C(K_{1,2})$ are isomorphic to $P_{3}$ and $C_{5}$, respectively, and obviousely $\chi^{t}_{d}(K_{m,n})=m+n~ \mbox{or}~ 4$ when $(m,n)$ is $(1,1)$ or $(1,2)$, respectively, we assume $(m,n)\not\in \{(1,1),(1,2)\}$. Consider $V\cup U$ as the partition of the vertex set of $K_{m,n}$ to the independent sets $V=\{v_{i}: 1\leq i \leq m\}$ and $U=\{u_{j}: 1\leq j \leq n\}$. Then $V\cup U\cup \mathcal{C}$ is a partition of the vertex set of $C(K_{m,n})$ in which $\mathcal{C}=\{c_{ij}~|~ 1\leq i \leq m, ~1\leq j \leq n  \}$. Let $ f=(V_{1}, V_{2},\cdots,V_{\ell}) $ be an arbitrary TDC of $C(K_{m,n})$.
Since the subgraph of $ C(K_{m,n})$ induced by $U$ is a complete graph of order $n$, we have $\ell \geq n$. In the following two cases, without loss of generality, we may assume $ u_{i}\in V_{i}$ for each $1\leq i \leq n$.

\vspace{0.15cm}
\textbf{Case 1.} $m=1  $.  Then $ n\geq 3$, by the assumption. If $ \ell = n $, then $ c_{1 \sigma(i)} \in V_{i} $ for each $ 1\leq i \leq n $ and some permutation $ \sigma $ on $ \{ 1, 2,\cdots,n \}$, which implies $ v_{1}\nsucc_{t} V_{i} $ for each $1\leq i \leq n$, a contradiction. Hence $\ell \geq n+1 $. Now since $ f=(V_{1}, V_{2},\cdots,V_{n+1}) $ is a TDC of $ C(K_{1,n}) $ where $ V_{i}=\{u_{i}\} $ for $ 1\leq i \leq n-1 $, $  V_{n}=\{v_{1}, u_{n}\}  $,  $V_{n+1}=\{c_{1i}~|~1\leq i \leq n \}$, we obtain $ \chi^{t}_{d}(C(K_{1,n}))=n+1$.

\vspace{0.15cm}

\textbf{Case 2.} $m\geq 2$. Then $ \vert V\cap (V_{1}\cup V_{2}\cup \cdots \cup V_{n}) \vert \leq 1, $ because if $ v_{i}\in V_{j} $ and $ v_{t}\in V_{k} $ for some $ 1\leq j \leq k \leq n $ and some $ 1\leq i \leq t \leq m, $ then $ c_{ik}\nsucc_{t} V_{p} $ for each $ 1\leq p \leq \ell $, a contradiction. If $|V\cap (V_{1}\cup V_{2}\cup \cdots \cup V_{n})|=0$, then $|V\cap (V_{n+1}\cup \cdots \cup V_{\ell})|\geq m$, and so $\ell \geq m+n$. In the other case, we may assume $  V\cap (V_{1}\cup V_{2}\cup \cdots \cup V_{n})  =\{v_{1}\}$ and $ v_{1}\in V_{1}$. Let $ v_{i} \in V_{n+i-1} $ for $ 2\leq i \leq m$. It can be easily seen that $f(c_{21})\geq m+n$ when $m=2$, and $f(c_{11})\geq m+n$ when $m>2$. So $ \ell \geq m+n$. For $m=2$, we consider $f=(V_{1}, V_{2},\cdots, V_{n+2}) $ where $ V_{i}=\{u_{i}\} $  for $ 1\leq i \leq n $, $V_{n+1}=\{v_{1}\} \cup \{c_{2i} ~|~1\leq i \leq n \} $ and $ V_{n+2}=\{v_{2}\}\cup \{c_{1i} ~|~1\leq i \leq n \} $, while for $m>2$ we consider $ f=(V_{1}, V_{2},\cdots, V_{m+n}) $ where $ V_{1}=\{v_{1}, u_{1}\} $, $ V_{i}=\{u_{i}\}$  for $2\leq i \leq n $, $V_{n+i-1}=\{v_{i}\}$ for $2\leq i \leq m $, and $ V_{m+n}=\{c_{ij}~|~ 1\leq i \leq m, 1\leq j \leq n\}$. Since $f$ is a TDC of $C(K_{m,n})$, we obtain $ \chi^{t}_{d}(C(K_{m,n}))=m+n$. 

\vspace{0.2cm}

In Figure \ref{fi:proofexample2}, $(\{v_1,u_1\},\{v_2\},\{v_3\},\{u_2\},\{u_3\},\{u_4\},\{u_5\},\{c_{1i},c_{2i},c_{3i}~|~1\leq i \leq 5\})$ is a min-TDC of $C(K_{3,5})$.
\end{proof}

\begin{figure}[ht]
\centerline{\includegraphics[width=5cm, height=5cm]{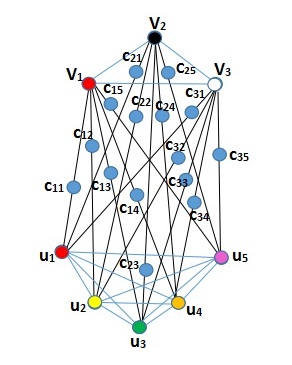}}
\vspace*{-0.4cm}
\caption{A min-TDC of $C(K_{3,5})$}\label{fi:proofexample2}
\end{figure}

\begin{prop}
\label{chi_d^t(C(K_{n_1, n_2, cdots ,n_w}))} For any complete $p$-partite graph $K_{n_1, n_2,\cdots,n_p}$ of order $ n\geq 4 $ in which $p\geq 3$ and $n_1\leq n_2 \leq \cdots \leq n_p$,
\begin{equation*}
\chi_d^t(C(K_{n_1, n_2,\cdots,n_p}))=\left\{
\begin{array}{ll}
n+1                   & \mbox{if } (n_1, n_2, \cdots, n_{p-1})=(2, 2,\cdots,2), \\
n+\lceil t_1/2\rceil  & \mbox{otherwise},
\end{array}
\right.
\end{equation*}
where $t_1=|\{~i~|~ n_{i}=1\}|$. 
\end{prop}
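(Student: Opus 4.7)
The plan is to prove matching upper and lower bounds via explicit construction and structural analysis of the color classes. Throughout, set $V = V(G)$ and $\mathcal{C} = \{c_{ij} : v_iv_j \in E(G)\}$; in $C(G)$, the subgraph induced on $V$ is the disjoint union $K_{n_1}\cup\cdots\cup K_{n_p}$, each $c_{ij}$ satisfies $N_{C(G)}(c_{ij}) = \{v_i,v_j\}$, and every singleton-part vertex $u_i\in V$ has $N_{C(G)}(u_i)\subseteq \mathcal{C}$.

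For the upper bound I would exhibit explicit TDCs. In the exceptional case $(n_1,\dots,n_{p-1})=(2,\dots,2)$, give each $v\in V$ its own color and place all of $\mathcal{C}$ in one extra class, for $n+1$ colors. In the non-exceptional case, again give each $v\in V$ its own color and then pair the $t_1$ singletons: for each pair $(u_{2k-1},u_{2k})$ introduce the singleton class $\{c_{u_{2k-1}u_{2k}}\}$ with a fresh color so both singletons dominate it (an odd $t_1$ is handled by one more analogous class), for a total of $n+\lceil t_1/2\rceil$ colors. The remaining subdivision vertices are distributed into the existing classes by selecting for each $c_{ij}$ a ``growth'' part-mate $v_k$ of $v_i$ (with $k\neq i,j$) and placing $c_{ij}$ in $V_k$, arranging the grown class to remain $\subseteq N(v_i)$ so that $v_i$ keeps a valid dominator; designated host vertices of each non-singleton part retain their singleton classes to cover all remaining edges of $G$.

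For the lower bound, let $f=(V_1,\ldots,V_\ell)$ be a min-TDC. Since each $c_{ij}$ must dominate a class $\subseteq\{v_i,v_j\}$, every edge $v_iv_j\in E(G)$ forces one of $\{v_i\},\{v_j\},\{v_i,v_j\}$ to be a color class. The key structural observation is that \emph{no color class contains three or more vertices of $V$}: if $B$ were such a block, any two $v,v'\in B$ would lie in different parts (as $B$ is independent in the disjoint-cliques $V$-subgraph), so $vv'\in E(G)$ would require $\{v\}$, $\{v'\}$, or $\{v,v'\}$ to be a class, contradicting $v,v'\in B$. Hence every $V$-class is a singleton or a pair, giving exactly $n-\beta$ $V$-classes where $\beta$ is the number of pair classes, and a similar pairwise analysis shows any two pair classes must span four distinct parts (otherwise some cross-edge of $G$ is uncovered). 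Moreover each singleton-part vertex $u_i$ can only dominate classes $\subseteq \mathcal{C}$ incident to $u_i$, and two distinct singletons share such a class only when it equals $\{c_{u_iu_j}\}$, forcing at least $\lceil t_1/2\rceil$ $\mathcal{C}$-only classes.

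The hard step, which I expect to require careful case analysis, is showing that choosing $\beta\geq 1$ never reduces the total below the claimed value. When a pair block involves a size-2 part, the remaining vertex becomes ``lonely''--it loses its singleton part-mate dominator and requires a dedicated $\mathcal{C}$-class; when a pair block involves two singletons, the subdivision vertices incident to those singletons (other than $c_{u_iu_j}$) cannot be absorbed into any remaining class without violating the TDC condition on some other $c_{xy}$, forcing additional $\mathcal{C}$-only classes. Summing the savings ($\beta$ fewer $V$-classes) against the penalties in each configuration recovers $n+\lceil t_1/2\rceil$ in the non-exceptional case and $n+1$ in the exceptional case--the latter being tight precisely because the absence of any part of size $\geq 3$ eliminates the ``growth'' absorption strategy available in the generic case.
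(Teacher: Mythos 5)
Your lower-bound skeleton is sound in outline (every $c_{ij}$ forces one of $\{v_i\}$, $\{v_j\}$, $\{v_i,v_j\}$ to be a color class \emph{exactly}, so at least $n-2$ vertices of $V$ keep exact singleton classes, and the $t_1$ singleton parts force at least $\lceil t_1/2\rceil$ classes contained in $\mathcal{C}$), but your upper-bound construction in the non-exceptional case does not work, and it is the upper bound that must realize the stated values. The very constraint you identify is destroyed by your absorption step: once some $c_{ij}$ is placed into $V_k=\{v_k\}$, the class $V_k$ is no longer a subset of $\{v_k,v_x\}$ for any $x$, so every subdivision vertex $c_{kx}$ loses its only admissible dominating classes unless $\{v_x\}$ stays pure. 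Hence the set of ``grown'' host classes must correspond to an independent set of $G$, i.e.\ be confined to a single part; your plan of designating hosts ``of each non-singleton part'' leaves every edge of $G$ joining two non-designated vertices of different parts uncovered. Even after confining growth to one part, the subdivision vertices adjacent to the grown vertices cannot be absorbed anywhere, so for $t_1=0$ your scheme needs $n+1$ colors on, say, $K_{2,3,3}$ or $K_{3,3,3}$, where the answer is $n$; a similar shortfall occurs for $t_1\geq 1$. The paper's construction is different and simpler: it uses exactly one cross-part pair class $\{v_1,v_2\}$ with $v_1v_2\in E(G)$ (legal since adjacency in $G$ means non-adjacency in $C(G)$, and $c_{12}$ then dominates this pair), keeps all other vertices of $V$ as pure singletons, and dumps all leftover subdivision vertices into a single dedicated extra class; the color saved by the pair pays for that extra class.

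Two further points. First, the hard direction of your lower bound --- that a pair class never yields a net saving below the claimed value --- is only sketched, and your auxiliary claim that ``any two pair classes must span four distinct parts'' is not the right statement: a short check on the four endpoints shows that no two pair classes inside $V$ can coexist at all, which is the fact actually needed. Second, your explanation of the exceptional case is incorrect: $(n_1,\dots,n_{p-1})=(2,\dots,2)$ does not exclude a part of size at least $3$ (consider $K_{2,2,5}$, which is exceptional). What makes this case exceptional is that every cross-part pair $\{v_1,v_2\}$ must meet a part of size exactly $2$, whose remaining vertex is then left with no pure singleton part-mate to dominate and no usable class inside $\mathcal{C}$; this is precisely the contradiction the paper uses to force $\ell\geq n+1$ there, while in the non-exceptional case with $t_1=0$ one can pick both endpoints of the pair in parts of size at least $3$.
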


\begin{proof}
Let $G$ be the complete $p$-partite graph $K_{n_1, n_2,\cdots,n_p} $ of order $ n\geq 4 $ in which $n_1\leq n_2 \cdots \leq n_p$, $p\geq 3$ and $X_1\cup \cdots \cup  X_p$ is the partition of $V(G)=\{~v_i~|~1\leq i \leq n\}$ to the maximal independent sets $X_1$, $\cdots $, $X_p$ which have respectivly the cardinalities $n_1$, $\cdots $, $n_p$. Let $f=(V_{1}, V_{2},\cdots,V_{\ell} ) $ be a min-TDC of $ C(G)$. Similar to the proof of Proposition \ref{chi_d ^t(C(G))=n+ lceil n/2 rceil}, $ |\{~i~| ~V_i \subseteq V(G) \mbox{ and } |V_{i}|=1 \}|\geq n-2$, and so $ |\{f(v_{i})~|~ 1\leq i \leq n\}|\geq n-1$. Now $ \{f(v_{i})~|~ 1\leq i \leq n\}\cap\{f(c_{ij})~|~ c_{ij} \in \mathcal{C}\}=\emptyset $ implies $ \ell\geq n$. 

\vspace{0.2cm}
\textbf{Case 1.} $(n_1, n_2, \cdots, n_{p-1})=(2, 2,\cdots,2)$. Then $\ell=n$ implies $|\{~f(c_{ij})~|~ v_iv_j\in E(G)  \}|=1$ and $| \{~f(v_{i})~|~ v_i\in V(G) \}|=n-1$. Hence there exist two vertices $v\in X_i$ and $v' \in X_j$ for some $i\neq j$ such that $f(v)=f(v')$, and so there exists a vertex $v''$ in $X_i \cup X_j$ such that $v'' \nsucc_{t} V_{k} $ for each $ 1\leq k \leq n $, a contradiction. Thus $ \ell\geq n+1$. On the other hand, by the assumptions $X_1=\{v_1,v_2\}$ and $ v_{n-1},v_{n}\in X_p$, since  $g_0=(V_{1}, V_{2},\cdots,V_{n+1}) $ is a TDC of $ C(G) $ where  \[ V_{1}=\{v_{1},v_{n}\},~~ V_{i}=\{v_{i }\}~ \mbox{for}~ 2\leq i \leq n-1,~   V_{n}=\{c_{2 ( n-1)}\},~  V_{n+1}=\mathcal{C}-\{c_{2 ( n-1)}\},
\] 
we obtain $ \ell=n+1$.

\vspace{0.2cm}
\textbf{Case 2.} $(n_1, n_2, \cdots, n_{p-1})\neq (2, 2,\cdots,2)$. Since there is nothing to prove when $t_1=0$, we assume $t_1\neq 0$. Then, as we saw in the proof of Proposition \ref{chi_d ^t(C(G))=n+ lceil n/2 rceil}, since the central of the subgraph of $G$ induced by $X_1\cup \cdots \cup  X_{t_{1}}$ is isomorphic to the central of the complete graph $K_{t_{1}}$, we have $\{f(c_{ij})~|~ 1\leq i< j\leq t_{1}\} \cap \{f(v_{i})~|~ v_i\in X_1\cup \cdots \cup  X_{t_{1}} \}=\emptyset$, and so $\ell \geq n+\lceil t_1/2\rceil$. 

\vspace{0.2cm}
Now to complete the proof it is sufficient to give a TDC of $C(G)$ with minimum color classes. Let $t_1=0$. Then $n_p\geq n_{p-1}\geq 3$. Now by assumptions $v_1\in X_{p-1}$ and $v_2\in X_{p}$, since  $g_1=(V_{1}, V_{2},\cdots,V_{n}) $ is a TDC of $ C(G) $ where 
\[
V_{1}=\{v_{1},v_{2}\},~ V_{i}=\{v_{i }\} \mbox{ for } 3\leq i \leq n, ~ V_{n}=\mathcal{C},
\]
we obtain $ \ell=n$. 

\vspace{0.2cm}
For $ t_1\geq 2$, by assumptions $X_i=\{v_{i}\}$ for $1\leq i \leq t_1$, since $ g_2=(V_{1}, V_{2},\cdots,V_{n+\lceil t_1/2 \rceil})$ is a TDC of $ C(G) $ where
\[
V_{1}=\{v_{1},v_{2}\},  ~V_{i}=\{v_{i+1}\} \mbox{ for } 2\leq i \leq n-1,
\]
\[
V_{n+i}=\{c_{(2i+1) ( 2i+2)}\} \mbox{ for } 0\leq i \leq \lceil t_1/2\rceil-1,
\]
\[
V_{n+\lceil t_1/2 \rceil}= \mathcal{C}- (V_1\cup \cdots \cup V_{n+\lceil t_1/2 \rceil -1}),
\]
we obtain $ \ell=n+\lceil t_1/2\rceil$. 

\vspace{0.2cm}

If $ t_1=1$ and $|\{~i~|~n_i\geq 3\}|\geq 1$, then by assumption $v_2\in X_p$ the function $g_2$ will be again a TDC of $ C(G) $ and so $ \ell=n+\lceil t_1/2\rceil$. In the last case $G=K_{1,2,\cdots ,2}$, by assumptions $X_1=\{v_1\}$, $X_2=\{v_2,v_3\}$, since the coloring function $ g_3=(V_{1}, V_{2},\cdots,V_{n+\lceil t_1/2 \rceil})$ is a TDC of $ C(G) $ where
\[
V_{1}=\{v_{1},v_{2}\},  ~V_{i}=\{v_{i+1}\} \mbox{ for } 2\leq i \leq n-1,
\]
\[
V_{n}=\{c_{13}\}, \mbox{ and } V_{n+1}= \mathcal{C}- \{c_{13}\},
\]
we obtain $ \ell=n+\lceil t_1/2\rceil$. 
\vspace{0.2cm}

Figure \ref{fi:proofexample4} shows the central of $K_{3,3,3}$ with the partition $X_1\cup X_2 \cup X_3$ of its vertex set to the independent sets $X_{1}=\{v_{1}, v_{2}, v_{3}\}$, $X_{2}=\{v_{4}, v_{5}, v_{6}\}$ and $X_{3}=\{v_{7}, v_{8}, v_{9}\}$ and the min-TDS $(\{v_1,v_9\},\{v_2\},\{v_3\},\{v_4\},\{v_5\},\{v_6\},\{v_7\},\{v_8\},\mathcal{C})$ of $C(K_{3,3,3})$.

\begin{figure}[ht]
\centerline{\includegraphics[width=5.5cm, height=4.5cm]{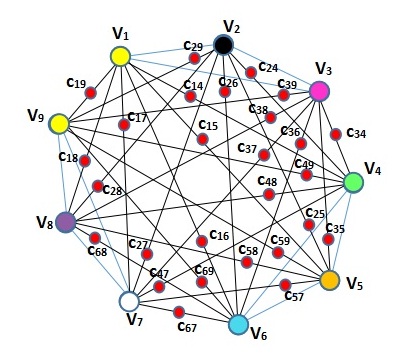}}
\vspace*{-0.3cm}
\caption{A min-TDC of $C(K_{3,3,3})$}\label{fi:proofexample4}
\end{figure}

\end{proof}

In the last proposition of this section we consider the double star graphs which are multipartite graphs but not complete. We recall that the \emph{double star} $S_{1,n,n}$ is a graph with the vertex set $\{ v_{0}, v_{1}, v_{2},\cdots, v_{n}, v_{n+1},\cdots,v_{2n} \}$ in which for $1\leq i \leq n$ every vertex $ v_{i}$ is adjacent to the two vertices $ v_{n+i} $ and $ v_{0}$ \cite{MNV}. 

\begin{prop}
\label{chi_d ^t (C(S_{1,n,n}))}
For any integer $n\geq 1$, $ \chi^{t}_{d}(C(S_{1,n,n}))=n+3$.
\end{prop}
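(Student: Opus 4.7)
The plan is to sandwich $\chi^t_d(C(S_{1,n,n}))$ between $n+3$ and $n+3$.

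For the upper bound, the case $n=1$ is handled by $S_{1,1,1}\cong P_3$ together with Proposition \ref{chi_d^t C(P_n)}. For $n\geq 2$ I would exhibit the partition
\[
V_i=\{v_i,v_{n+i}\}\ (1\leq i\leq n),\ V_{n+1}=\{v_0\},\ V_{n+2}=\{c_{0i}\mid 1\leq i\leq n\},\ V_{n+3}=\{c_{i(n+i)}\mid 1\leq i\leq n\}.
\]
Each $V_i$ $(1\leq i\leq n)$ is independent because $v_iv_{n+i}\in E(S_{1,n,n})$, and $V_{n+2},V_{n+3}$ are independent because $\mathcal{C}$ is an independent set of $C(G)$. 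Total domination is immediate: $c_{0i}\succ_t V_{n+1}$, $c_{i(n+i)}\succ_t V_i$, $v_0\succ_t V_{n+2}$ from the respective neighborhoods, and each $v_i$ or $v_{n+i}$ is totally dominated by any $V_j$ with $j\neq i$ since $v_j$ and $v_{n+j}$ are non-adjacent to $v_i$ and $v_{n+i}$ in $G$ and therefore joined to them in $C(G)$.

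For the lower bound, let $f=(V_1,\ldots,V_\ell)$ be a min-TDC. Since $N_{C(G)}(c_{i(n+i)})=\{v_i,v_{n+i}\}$, the class $B_i$ totally dominating $c_{i(n+i)}$ lies in $\{v_i,v_{n+i}\}$; the $n$ pairs being pairwise disjoint, $B_1,\ldots,B_n$ are $n$ distinct classes. The class $C_0\ni v_0$ is distinct from each $B_i$ because $v_0$ belongs to no pair, so $\ell\geq n+1$. I then force two further classes by case analysis on (i) the totally-dominating class $D_0$ of $v_0$, which satisfies $D_0\subseteq N(v_0)=\{c_{0i}\}\cup\{v_{n+j}\}$, and (ii) any class $E$ containing some $c_{k(n+k)}$. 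Neither equals $C_0$ (since $v_0\notin N(v_0)$ and $c_{k(n+k)}\notin N(v_0)$), and $E$ is never a $B_i$ because no $B_i$ contains a $c$-vertex. The only way $D_0$ can coincide with some $B_i$ is to have $B_i=\{v_{n+i}\}$, in which case $v_i$ must lie in a fresh class; if that fresh class equals $C_0$, then $C_0=\{v_0,v_i\}$ forces the dominator of each remaining $c_{0k}$ to be the singleton $\{v_k\}$, producing $n-1$ forced singletons plus $n-1$ pairwise-adjacent classes for the $v_{n+k}$ with $k\neq i$, well beyond $n+3$; hence that fresh class is genuinely new. Symmetrically, if $E=C_0$ then $c_{k(n+k)}\in C_0$ forces each $A_i\subseteq\{v_0,v_i\}$ of $c_{0i}$ to equal $\{v_i\}$, triggering the same singleton blow-up; and one checks that $E$ does not accidentally coincide with the fresh class from step (i), because the $c_{j(n+j)}$ incident to $v_i$ cannot lie in the same independent class as $v_i$, so an extra class is forced for it. Summing gives $\ell\geq n+3$.

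The main obstacle is the bookkeeping in the lower-bound case analysis: checking that every potential coincidence among the classes $C_0$, $D_0$ (or its substitute), $E$, and the $B_i$'s either produces a genuinely new class or triggers the singleton blow-up past $n+3$. Once the cases are correctly stratified, the verifications are straightforward consequences of the structure of $N_{C(G)}(c)$ for $c\in\mathcal{C}$ and of the independent sets of $C(G)[V]$.
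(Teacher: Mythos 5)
Your upper bound is exactly the paper's extremal coloring and is correct (including the separate treatment of $n=1$ via $S_{1,1,1}\cong P_3$ and Proposition \ref{chi_d^t C(P_n)}). Your lower bound rests on the same underlying idea as the paper's --- the classes totally dominated by the subdivision vertices are trapped inside two-element neighborhoods --- but your case analysis has a concrete quantitative hole. In the sub-case where $D_0=B_i=\{v_{n+i}\}$ and the ``fresh'' class of $v_i$ is $C_0$, your inventory is $(n-1)$ singletons $\{v_k\}$, $(n-1)$ classes for the pairwise-adjacent leaves $v_{n+k}$ with $k\neq i$, plus $C_0$ and $B_i$: that is $2n$ classes. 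But $2n\geq n+3$ only when $n\geq 3$; for $n=2$ it gives $4<5$, and for $n=3$ it gives exactly $n+3$, so ``well beyond $n+3$'' is false and the bound is simply not established at $n=2$. (The conclusion is still true there: the total domination requirement of $c_{0i}$ forces $C_0$ to be exactly $\{v_0,v_i\}$, after which $c_{k(n+k)}$ for $k\neq i$ fits in none of the four identified classes and forces a fifth; alternatively $S_{1,2,2}\cong P_5$ and Proposition \ref{chi_d^t C(P_n)} applies. But your count does not deliver this.) There are also internal inconsistencies: you assert early on that neither $D_0$ nor $E$ equals $C_0$, yet the cited reasons only give $D_0\neq C_0$ and $E\neq D_0$, and you then rightly spend the rest of the argument on the case $E=C_0$; and the closing sentence about $E$ not coinciding with the fresh class does not cleanly dispose of the possibility $c_{i(n+i)}\in C_0$, which has to be folded back into the $E=C_0$ blow-up.

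For comparison, the paper reaches $\ell\geq n+1$ at once from the clique on $\{v_0,v_{n+1},\dots,v_{2n}\}$ in $C(S_{1,n,n})$, then kills $\ell=n+1$ because $v_0$ would dominate no class, and kills $\ell=n+2$ by showing the classes would be forced to be $V_i=\{v_i,v_{n+i}\}$, $V_{n+1}=\{v_0\}$ and $V_{n+2}\supseteq\{c_{i(n+i)}\mid 1\le i\le n\}$, again leaving $v_0$ with nothing to dominate. That stratification avoids the coincidence bookkeeping entirely; I recommend either adopting it or explicitly repairing your $n=2$ sub-case.
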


\begin{proof}
Let $S_{1,n,n}$ be a double star graph with the vertex set $V=\{v_i~|~0\leq i \leq 2n\}$ and the edge set $E=\{v_0v_i,v_iv_{n+i}~|~1\leq i \leq n\}$. Then $V(C(S_{1,n,n}))=V\cup \mathcal{C}$ where $\mathcal{C}=\{c_{0i},c_{i(n+i)}~|~1\leq i \leq n\}$. Let $ f=(V_{1}, V_{2},\cdots,V_{\ell})$ be a TDC of $C(S_{1,n,n})$. Since the subgraph induced by $\{v_{n+i}~|~1\leq i \leq n\}\cup \{v_0\}$ is isomorphic to a complete graph of order $n+1$, we have $\chi^{t}_{d}(C(S_{1,n,n}))\geq n+1$. Without loss of generality, we may assume $v_{n+i}\in V_i$ for each $1\leq i \leq n$. Since $\ell=n+1$ implies $v_i\in V_i$ for each $1\leq i \leq n$, and so $ v_{0}\nsucc_{t} V_{i} $ for each $1\leq i \leq n$, we may assume $\ell\geq n+2$. Let $\ell=n+2$. If $ V_{i}=\{v_{n+i}\} $ for some $ 1\leq i \leq n$, then $ v_{i}\in V_{n+1} \cup V_{n+2}$, and so $c_{i(n+i)}\notin V_{1}\cup \cdots \cup V_{n+2}$, a contradiction. Therefore $ V_{i}=\{ v_{i}, v_{n+i}\}$ for $1\leq i \leq n$, and so $c_{i(n+i)} \in V_{n+2} $ and $c_{0i} \in V_{n+1} \cup V_{n+2}$ for each $ 1\leq i \leq n$. Since $c_{0i}\nsucc_{t} V_{j} $ for $1\leq j \leq n$, and $c_{i(n+i)} \in V_{n+2}$, we conclude $c_{0i}\succ_{t} V_{n+1}$, which implies $V_{n+1}=\{v_{0}\}$. Hence $ c_{0i}\in V_{n+2} $ for $1\leq i \leq n$, and so $v_{0}\nsucc_{t} V_{i} $ for each $ i$, a contradiction. Therefore $\ell \geq n+3$. Now since $ f=(V_{1},\cdots,V_{n+3})$ is a  TDC  of $C(S_{1,n,n})$ with $ n+3 $ color classes where $ V_{i}=\{v_{i}, v_{n+i}\}$ for $ 1\leq i \leq n$, $ V_{n+1}=\{ v_{0}\}$, $V_{n+2}=\{c_{i(n+i)}~|~1\leq i \leq n\}$, $ V_{n+3}=\{c_{0i}~|~ 1\leq i \leq n \}$, we obtain $ \chi^{t}_{d}(C(S_{1,n,n}))=n+3$. 
\vspace{0.2cm}

In Figure \ref{fi:proofexample1}, the coloring function $(\{v_0\},\{v_2,v_5\},\{c_{14},c_{25},c_{36}\},\{c_{01},c_{02},c_{03}\},\{v_1,v_4\},\{v_3,v_6\})$ is a min-TDC of $C(S_{1,3,3})$.

\begin{figure}[ht]
\centerline{\includegraphics[width=4cm, height=4cm]{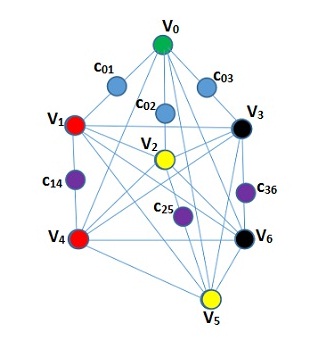}}
\vspace*{-0.4cm}
\caption{A min-TDC of $C(S_{1,3,3})$}\label{fi:proofexample1}
\end{figure}
\end{proof}


\section{Problems}

Finally we end our discussion with some problems and questions for further researchs.

\begin{prob}
Characterize graphs $ G $ satisfies $\chi_{d}^{t}(C(G))=\chi_{d}^{t}(G)$.
\end{prob}

\begin{prob}
For $t<n$ find connected graphs $G$ of order $n\geq 2$ with a longest path of order $t$ such that $\chi_d ^t(C(G))=n+ \lceil t/2 \rceil$.
\end{prob}

\begin{ques}
Whether for any connected graph $G$ of order at least $2$, $ \chi_d ^t(C(G))\geq \chi_d ^t(G)$?  
\end{ques}

\begin{ques}
Whether for any connected graph $G$ of order at least $3$, $ \chi_d ^t(C(G)) \neq \chi (C(G))$?
\end{ques}



\begin{thebibliography}{20}


\bibitem{GJ} M. R. Garey, D. S. Johnson, \textit{Computers and Intractability}, W. H. Freeman and Co., 1978.

\bibitem{hhs1} T. W. Haynes, S. T. Hedetniemi, P. J. Slater (eds). \textit{Fundamentals Domination in Graphs}, Marcel Dekker, Inc. New York, 1998.

\bibitem{hhs2} T. W. Haynes, S. T. Hedetniemi, P. J. Slater (eds), \textit{Domination in Graphs: Advanced Topics}, Marcel Dekker, Inc. New York, 1998.

\bibitem{Hen2015} M. A. Henning, Total dominator colorings and total domination in graphs, \emph{Graphs and Combinatorics}, 31 (2015) 953--974.

\bibitem{HeYe13} M. A. Henning, A. Yeo, \emph{Total domination in graphs} (Springer Monographs in Mathematics) (2013) ISBN: 978-1-4614-6524-9 (Print) 978-1-4614-6525-6 (Online).

\bibitem{Kaz-Par}  P. Jalilolghadr, A. P. Kazemi, A. Khodkar, Total dominator coloring of the circulant graphs $C_n(a,b)$, {\em manuscript}.

\bibitem{Kaz2015}  A. P. Kazemi, Total dominator chromatic number of a graph, {\em Transactions on Combinatorics}, 4(2) (2015), 57--68.

\bibitem{Kaz2014}  A. P. Kazemi, Total dominator coloring in product graphs, {\em Utilitas Mathematica}, 94 (2014) 329--345.

\bibitem{Kaz2016}  A. P. Kazemi, Total dominator chromatic number of Mycieleskian graphs, {\em Utilatas Mathematica}, 103 (2017) 129-137.

\bibitem{Nordhaus}  E.A. Nordhaus and j.w. Gaddum, On compelementary graphs {\em Amer. Math. Monthly}, 63 (1956) 175-177.


\bibitem{MNV}M. Venkatachalam, N. Mohanapriya, J. V. Vivin, Star coloring on double star graph families, \emph{Journal of Modern Mathematics and Statistics}, 5(1) (2011) 33--36.

\bibitem{Vv}  J. V. Vernold, Harmonious coloring of total graphs, $n$-leaf, central graphs and circumdetic graphs, Ph.D Thesis, Bharathiar University, Coimbatore, India (2007).

\bibitem{West} {D. B. West,} \textit{Introduction to Graph Theory}, 2nd ed, prentice hall, USA, (2001).
\end{thebibliography}
\end{document}